\numberwithin{equation}{section}
\newtheorem{theorem}{Theorem}[section]
\newtheorem{lemma}[theorem]{Lemma}
\newtheorem{proposition}[theorem]{Proposition}
\newtheorem{corollary}[theorem]{Corollary}
\newtheorem{conjecture}[theorem]{Conjecture}
\theoremstyle{definition}
\newtheorem{definition}[theorem]{Definition}
\theoremstyle{remark}
\newtheorem{remark}[theorem]{Remark}
\numberwithin{equation}{section}
\newcommand{\C}{\mathbb{C}}
\newcommand{\Sp}{\mathbb{S}}
\newcommand{\R}{\mathbb{R}}
\newcommand{\T}{\mathbb{T}}
\newcommand{\Z}{\mathbb{Z}}
\newcommand{\N}{\mathbb{N}}
\newcommand{\U}{\operatorname{U}}
\newcommand{\id}{\operatorname{id}}
\newcommand{\diag}{\operatorname{diag}}
\newcommand{\co}{\colon\thinspace}
\newcommand{\bs}{\boldsymbol}
\newcommand{\cc} [1] {\overline {{#1}}}
\newcommand{\Hilb}{\operatorname{Hilb}}
\newcommand{\calC}{\mathcal{C}}
\begin{document}

\title{On compositions with $x^2/(1-x)$}

\author{Hans-Christian Herbig}
\address{Charles University in Prague, Faculty of Mathematics and Physics,
Sokolovsk\'{a} 83, 186 75 Praha 8, Czech Republic}
\email{herbig@imf.au.dk}

\author{Daniel Herden}
\address{University of Duisburg-Essen, Department of Mathematics,
Campus Essen, 45117 Essen, Germany}
\email{daniel.herden@uni-due.de}

\author{Christopher Seaton}
\address{Department of Mathematics and Computer Science,
Rhodes College, 2000 N. Parkway, Memphis, TN 38112}
\email{seatonc@rhodes.edu}

\thanks{The first and second author were supported by the grant GA CR P201/12/G028.
The third author was supported by a Rhodes College Faculty Development Grant as well as
the E.C. Ellett Professorship in Mathematics.}
\keywords{}
\subjclass[2010]{Primary 05A15; Secondary 11B68, 13A50, 53D20}

\begin{abstract}
In the past, empirical evidence has been presented that Hilbert series of symplectic quotients of unitary representations obey a certain universal  system of infinitely many constraints. Formal series with
this property have been called \emph{symplectic}. Here we show that a formal power series is symplectic if and only if it is a formal composite with the formal power series $x^2/(1-x)$. Hence the set of symplectic power series forms a subalgebra of the algebra of formal power series. The subalgebra property is translated into an identity for the coefficients of the even Euler polynomials, which can be interpreted as a cubic identity for the Bernoulli numbers. Furthermore we show that a rational power series is symplectic if and only if it is invariant under the idempotent M\"{o}bius transformation
$x\mapsto x/(x-1)$. It follows that the Hilbert series of a graded Cohen-Macaulay algebra $A$ is symplectic if and only if  $A$ is Gorenstein with its a-invariant and its Krull dimension adding up to zero. It is shown that this is the case for algebras of regular functions on symplectic quotients of unitary representations of tori.
\end{abstract}

\maketitle

\tableofcontents


\section{Introduction}
\label{sec:Intro}


Let $G\to \U(V)$ be a unitary representation of a compact Lie group $G$ on a
hermitian vector space $(V,\langle\:,\:\rangle)$. Here, $V$ is viewed as a symplectic manifold or real variety. The $\R$-algebra of smooth functions on $V$ is denoted $\calC^\infty(V)$,
and its subalgebra of real regular functions is denoted $\R[V]$. Note that $\R[V]$ is actually a Poisson subalgebra of $\calC^\infty(V)$.
The symplectic form on $V$ is given by the imaginary part of the scalar product $\langle\:,\:\rangle$, and the $G$-action on $V$ is Hamiltonian with moment map
\[
    J:V\to \mathfrak  g^*, \quad J_\xi(v):=(J(v),\xi):=\frac{\sqrt{-1}}{2}\langle v,\xi.v\rangle.
\]
Here, $\xi.v:=d/dt_{t=0}\left(\exp(-t\xi).v\right)$ denotes the infinitesimal action of $\xi\in \mathfrak g$ on $v\in V$ and $(\:,\:)$ stands for the dual pairing between the dual space $\mathfrak g^*$ and the Lie algebra $\mathfrak g$ of $G$.

Let us denote by $Z:=J^{-1}(0)$ the zero fibre of the moment map. If $G$ is finite, then $J=0$ by convention and $Z=V$. Since $J$ is $G$-equivariant, we can consider the space $M_0:=Z/G$ of $G$-orbits in $Z$, the so-called \emph{symplectic quotient}. It is a stratified symplectic space and can be viewed in a natural way as a semialgebraic set (for more information the reader may consult \cite{SjamaarLerman}).
In order to define the smooth structure on $M_0$, one  introduces the vanishing ideal $I_Z$ of $Z$ inside $\calC^\infty(V)$. Then the algebra of smooth functions on $M_0$ is given by $\calC^\infty(M_0):=\calC^\infty(V)^G/(I_Z\cap \calC^\infty(V)^G)$.  Note that
$\calC^\infty(M_0)$ carries a canonical Poisson bracket.
The $\N$-graded $\R$-algebra of regular functions $\R[M_0]:=\R[V]^G/(I_Z\cap \R[V]^G)$ is a Poisson subalgebra of $\calC^\infty(M_0)$.

In this paper, we are concerned with the Hilbert series of the $\N$-graded algebra
$\R[M_0]$. This is the generating function counting the dimensions $\dim_{\R}(\R[M_0]_i)$ of the spaces of regular functions of degree $i\in\N$:
\[
    \Hilb_{\R[M_0]}(t):=\sum_{i\ge 0}\dim_{\R}(\R[M_0]_i)\:t^i\in \N[\![t]\!]\subset\C[\![t]\!].
\]
The Poisson brackets will play no role in the considerations to follow.

The main motivation for our investigation is Conjecture \ref{HigherRelations} below, that has been formulated in \cite{HerbigSeaton}. We recall the following definition from  \cite{HerbigSeaton}.

\begin{definition} \label{def:symplectic} For a formal power series $\varphi(x)=\sum_{i \ge 0} \gamma_i\: x^i\in \C[\![x]\!]$
and $m\ge 1$ we introduce the linear constraint
\begin{equation}
\tag{$\operatorname S_m$}\label{Sm}
    \sum\limits_{k=0}^{m-1} (-1)^k {m-1 \choose k} \gamma_{m+k} = 0.
\end{equation}
We say that $\varphi(x)$ is \emph{symplectic} if condition (\ref{Sm}) holds for each $m\ge 1$.
A meromorphic function $\psi(t)$ in the variable $t$ is said to be \emph{symplectic at $a\in \C$ of pole order $d\in \Z$} if the formal power series $x^d\psi(a-x)\in \C[\![x]\!]$ is symplectic.  Here we assume that the order of the pole of $\psi(t)$ at $t=a$ is $\le d$.
\end{definition}

The reader is invited to check that in a symplectic power series $\varphi(x)=\sum_{i \ge 0} \gamma_i\: x^i\in \C[\![x]\!]$ the odd coefficients $\gamma_1,\gamma_3,\gamma_5,\dots$
are uniquely determined by the even ones $\gamma_0,\gamma_2,\gamma_4,\dots$. Moreover, for each choice of the even coefficients $\gamma_0,\gamma_2,\gamma_4,\dots$ there is a uniquely determined symplectic power series $\varphi(x)=\sum_{i \ge 0} \gamma_i\: x^i$.

The curious sign convention (we expand in powers of $(a-x)$ instead of $(x-a)$) appears to be more natural, because in this way our typical examples render non-negative coefficients.  When we say a meromorphic function is symplectic at $x = a$ of order $d=0$,
we mean that it is analytic at $x = a$ and symplectic as a series expanded in $(a - x)$.  Note that we only use this sign convention
for a formal power series in the context of Lemma \ref{lem:RatCompos}.

\begin{conjecture}[\cite{HerbigSeaton}] \label{HigherRelations} Let $G\to U(V)$ be a unitary representation of a compact Lie group $G$ and let $\R[M_0]$ be the graded $\R$-algebra of regular functions on the corresponding symplectic quotient $M_0$.  Then $\Hilb_{\R[M_0]}(t)$ is symplectic at $t=1$ of order $d=\dim_\R(M_0)$.
\end{conjecture}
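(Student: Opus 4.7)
The plan is to invoke the Gorenstein criterion established in this paper (mentioned in the abstract): a Cohen--Macaulay graded algebra has symplectic Hilbert series at $t=1$ of order equal to its Krull dimension $d$ precisely when it is Gorenstein with a-invariant $-d$. It therefore suffices to verify that $\R[M_0]$ is Cohen--Macaulay, Gorenstein, and has a-invariant $-\dim_\R M_0$.

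The natural strategy is to propagate these properties through the two-stage construction $\R[V]\rightsquigarrow\R[V]^G\rightsquigarrow\R[M_0]$. The polynomial ring $\R[V]$ is Gorenstein of Krull dimension $\dim_\R V$ with a-invariant $-\dim_\R V$. Because $G$ acts unitarily on $V$, its image in $\operatorname{GL}_\R(V)$ has real determinant identically equal to $1$; by Stanley--Watanabe-type theorems for linearly reductive groups, one expects $\R[V]^G$ to be Gorenstein of Krull dimension $\dim_\R V-\dim_\R G$ with a-invariant still $-\dim_\R V$. Quotienting by the $\dim_\R G$ moment-map components $J_{\xi_i}\in\R[V]^G$, each of degree $2$, would preserve the Gorenstein property and shift the a-invariant by $+2\dim_\R G$, provided the $J_{\xi_i}$ form a regular sequence in $\R[V]^G$. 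This would yield Krull dimension $\dim_\R V-2\dim_\R G=\dim_\R M_0$ and a-invariant $-\dim_\R V+2\dim_\R G=-\dim_\R M_0$, exactly as required.

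The main obstacle is the regular sequence property. On $V$ itself the $J_{\xi_i}$ do not in general form a regular sequence: the zero fibre $Z=J^{-1}(0)$ picks up excess-dimensional components from strata with positive-dimensional stabilizer, so $Z\subset V$ is typically not a complete intersection. The weaker condition we actually need is that the images $J_{\xi_i}\in\R[V]^G$ form a regular sequence in the invariant ring, which can differ from regularity in $\R[V]$ because $G$-invariance eliminates the pathological components. A plausible route is to analyze $Z$ via the Marle--Guillemin--Sternberg orbit-type stratification, combine this with the Kempf--Ness identification of $M_0$ with a complex GIT quotient, and verify Koszul exactness directly over $\R[V]^G$. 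The torus case handled later in the paper benefits from the orbit-type stratification being controlled by a hyperplane arrangement, where an explicit Molien--Weyl computation already delivers the symplectic identity without this Koszul-style analysis; extending past this simplification for general compact $G$ is where the real difficulty of the conjecture lies.
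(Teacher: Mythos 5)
What you are trying to prove is stated in the paper as a conjecture: the paper itself proves it only when $G$ is a torus (Theorem \ref{TheoremTorus}), and your argument does not close the general case either --- it is a programme whose decisive steps are left as hopes (``one expects'', ``provided'', ``a plausible route''). Concretely, three links fail or are unestablished. First, the bookkeeping $\dim_\R M_0=\dim_\R V-2\dim_\R G$ together with a length-$\dim_\R G$ regular sequence of moment map components is simply false for general compact $G$: for the defining representation of $\mathrm{SU}(2)$ on $\C^2$ one has $Z=\{0\}$, so $M_0$ is a point, $\dim_\R M_0=0\neq 4-6$, and $\R[V]^G\cong\R[u]$ with $\deg u=2$ has Krull dimension $1$, hence cannot contain a regular sequence of length $3$; the conjecture holds there trivially, but not by your mechanism. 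Second, the assertion that $\R[V]^G$ is Gorenstein with a-invariant $-\dim_\R V$ is not a ``Stanley--Watanabe-type'' theorem once $\dim G>0$: Watanabe's theorem is a finite-group statement, Gorensteinness of invariant rings of reductive groups needs Knop-type codimension hypotheses, and in the same $\mathrm{SU}(2)$ example the invariant ring has a-invariant $-2\neq-\dim_\R V=-4$, so the subsequent shift computation collapses. Third, even granting a regular sequence, you would still need $I_Z\cap\R[V]^G$ to be generated by the moment map components, so that $\R[M_0]$ really is the Koszul quotient of $\R[V]^G$; this real-algebraic statement is known for tori from \cite{HerbigIyengarPflaum,FarHerSea} but not in general. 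In effect the proposal reduces the conjecture to statements at least as hard as the conjecture.

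For the torus case --- the only case the paper settles --- your route is also not the one taken, and the difference is instructive. The paper never establishes (or needs) Cohen--Macaulayness or Gorensteinness of $\R[M_0]$ or of $\R[V]^G$: the regular-sequence fact enters only through \cite[Proposition 2.1]{HerbigSeaton}, which expresses $\Hilb_{\R[M_0]}(t)=(1-t^2)^\ell\Hilb_{\R[V]^G}(t)$ as a Molien--Weyl integral; a residue/contour symmetry of the integrand under $t\mapsto t^{-1}$ then yields the functional equation $\Hilb_{\R[M_0]}(t^{-1})=t^{2(n-\ell)}\Hilb_{\R[M_0]}(t)$, and Corollary \ref{CorRatSympOrderD} with $a=1$, $d=2(n-\ell)$ converts this directly into the symplectic property, since for rational series the functional equation is \emph{equivalent} to being symplectic. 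Your direction (Cohen--Macaulay plus Gorenstein plus a-invariant $\Rightarrow$ symplectic, via Stanley's theorem) is only the converse reading recorded in the corollary following Corollary \ref{CorRatSympOrderD}; to make it work even for tori you would have to prove graded Gorensteinness of $\R[M_0]$ itself, a nontrivial result the paper deliberately bypasses, and for general compact $G$ the conjecture remains open.
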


There is an analogue of this conjecture for \emph{cotangent lifted representations} of reductive complex Lie groups. Certainly, over the complex numbers, there exist also  symplectic quotients that arise from non-cotangent lifted representations whose Hilbert series are symplectic.
For instance, the invariant ring of any unimodular representation of a finite group has  a symplectic Hilbert series;
for more details, see Section \ref{sec:Calculations}.  To name a specific example,
for $n\ge 2$ the action of the binary dihedral group $\mathbb D_n\subset  \operatorname{SL}_2(\C)$ on
$\C^2$  cannot be cotangent lifted as there are no quadratic invariants.

Our aim is to give a simple proof of the following statement.

\begin{theorem}\label{TheoremTorus}
Conjecture \ref{HigherRelations} holds if $G$ is a torus.
\end{theorem}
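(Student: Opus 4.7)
\emph{Plan.} The plan is to apply the characterization established earlier in the paper: the Hilbert series of a graded Cohen--Macaulay algebra $A$ is symplectic at $t=1$ of order equal to its Krull dimension if and only if $A$ is Gorenstein with $a$-invariant equal to the negative of its Krull dimension. It therefore suffices to show that $\R[M_0]$ is Cohen--Macaulay and Gorenstein, with $a$-invariant equal to $-\dim_\R M_0$.

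I would first analyze the ambient invariant ring $\R[V]^T$. Let $n=\dim_\C V$, $k=\dim T$, and $a_1,\dots,a_n\in\Z^k$ the weights of the $T$-action on $V$. After complexification, $\R[V]^T\otimes_\R\C\cong\C[V\oplus\overline{V}]^T$ is the coordinate ring of an affine toric variety: the torus $T$ acts on $V\oplus\overline{V}$ with weight system $\{\pm a_1,\dots,\pm a_n\}$, which sums to zero. Consequently, the ring is Cohen--Macaulay by Hochster's theorem, Gorenstein by the reductive analogue of Watanabe's theorem (since the vanishing of the weight sum forces $T\subset\operatorname{SL}(V\oplus\overline{V})$), and its $a$-invariant is $-\dim_\C(V\oplus\overline{V})=-2n$. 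The latter identification can be obtained by a Molien or lattice-point calculation, or from the general formula $a(k[V]^G)=-\dim V$ for a reductive $G\subset\operatorname{SL}(V)$ with $k[V]^G$ Cohen--Macaulay.

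Next I would show that the components $J_1,\dots,J_k$ of the moment map form a regular sequence of degree-$2$ homogeneous elements in $\R[V]^T$. Because $\R[V]^T$ is Cohen--Macaulay of Krull dimension $2n-k$, this reduces to the codimension-$k$ statement $\dim \R[V]^T/(J_1,\dots,J_k)=2(n-k)$, which follows from faithfulness of the $T$-action and from the standard geometric fact that $J$ is a submersion on a Zariski-open subset of $V$, so that $\dim_\R Z/T=2(n-k)$. Granted this, the quotient $\R[M_0]=\R[V]^T/(J_1,\dots,J_k)$ inherits the Cohen--Macaulay and Gorenstein properties, its Krull dimension is $(2n-k)-k=2(n-k)=\dim_\R M_0=:d$, and its $a$-invariant is $-2n+\sum_{i=1}^k \deg J_i=-2n+2k=-d$. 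Thus $a+d=0$, so the cited criterion yields that $\Hilb_{\R[M_0]}(t)$ is symplectic at $t=1$ of order $d$, establishing the theorem.

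The delicate point in this program is the regular sequence claim. Although the geometric dimension count is transparent, a fully rigorous proof requires identifying the ideal $(J_1,\dots,J_k)\cdot\R[V]^T$ with $I_Z\cap\R[V]^T$ and controlling possible extraneous nilpotent contributions, as well as checking that passing between the real structure on $V$ and the complexified semigroup ring picture introduces no additional relations.
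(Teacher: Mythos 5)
Your route is genuinely different from the paper's. The paper never discusses Gorensteinness of $\R[M_0]$: it writes $\Hilb_{\R[M_0]}(t)$ via Molien's formula as an integral over $\T^\ell$ (with a factor $(1-t^2)^\ell$ accounting for the moment map components, quoting \cite[Proposition 2.1]{HerbigSeaton}), proves the functional equation $\Hilb_{\R[M_0]}(t^{-1})=t^{2(n-\ell)}\Hilb_{\R[M_0]}(t)$ by a residue/pole-swapping argument in each circle variable, and concludes by Corollary \ref{CorRatSympOrderD}, which needs only the rational functional equation and no Cohen--Macaulay hypothesis. Your argument instead feeds the Stanley-type corollary from the introduction, so it must establish that $\R[M_0]$ is Cohen--Macaulay and Gorenstein with $a=-d$. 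Structurally, though, both proofs rest on the same nontrivial external input: that $\R[M_0]=\R[V]^T/(J_1,\dots,J_\ell)$ with the moment map components a regular sequence in $\R[V]^T$. The paper imports exactly this from \cite{HerbigIyengarPflaum,FarHerSea} (packaged into \cite[Proposition 2.1]{HerbigSeaton}); you correctly flag it as the delicate point but do not discharge it, and your proposed justification is insufficient: the submersion property on a dense subset controls the dimension of the semialgebraic set $Z/T$, but it bounds neither the Krull dimension of $\R[V]^T/(J_1,\dots,J_\ell)$ (real zero sets can have strictly smaller dimension than Krull dimension, cf.\ $x^2+y^2$ in $\R[x,y]$) nor shows $I_Z\cap\R[V]^T=(J_1,\dots,J_\ell)\R[V]^T$. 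Those are precisely the nondegeneracy/real-ideal theorems of \cite{HerbigIyengarPflaum,FarHerSea}; with those citations this step, and the transfer of CM, Gorenstein, dimension and $a$-invariant to the quotient, is fine.

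The other weak point is your first step. The ``reductive analogue of Watanabe's theorem'' and the ``general formula $a(k[V]^G)=-\dim V$ for reductive $G\subset\operatorname{SL}(V)$ with CM invariants'' are not theorems at this level of generality (for connected reductive groups the canonical module of the invariant ring is a module of covariants, and extra hypotheses are needed to identify it with a shift of the ring). For a torus, however, your claims are correct and provable directly: $\R[V]^T\otimes_\R\C\cong\C[V\oplus\cc{V}]^T$ is the normal affine semigroup ring on $S=\ker(A\,|\,-A)\cap\N^{2n}$, Cohen--Macaulay by Hochster; since the weights of $V\oplus\cc{V}$ sum to zero, the all-ones vector lies in $S$, and the Danilov--Stanley description shows the canonical module is spanned by the interior lattice points, which is principal with generator of degree $2n$, giving Gorenstein with $a=-2n$. (One also needs $\operatorname{rank}A=\ell$, arranged WLOG in the paper, to get $\dim\R[V]^T=2n-\ell$; and since only the Hilbert series enters the criterion, you may work entirely with the complexification.) With these repairs your argument is a valid alternative proof; its payoff is the structural statement that $\R[M_0]$ is graded Gorenstein with $a+d=0$, whereas the paper's residue computation is more self-contained on the commutative-algebra side.
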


The crucial insight that helps us to achieve our goal is the following reformulation of
what it means for a generating function to be symplectic.

\begin{proposition} \label{FormalComposite} A formal power series $\varphi(x)$ is symplectic if and only if it is a formal composite with $x^2/(1-x)$, i.e., if there exists a formal power series $\rho(y)\in  \C[\![y]\!]$ such that $\varphi(x)=\rho(x^2/(1-x))$.
\end{proposition}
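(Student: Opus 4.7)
The plan is to reduce the infinite system of linear constraints (\ref{Sm}) to a single coefficient-extraction identity, and then read off the structure of composites with $y:=x^2/(1-x)$. Expanding $(1-x)^{m-1}=\sum_{k=0}^{m-1}\binom{m-1}{k}(-x)^k$ and applying the Cauchy product rule,
\[
[x^{2m-1}]\,\varphi(x)(1-x)^{m-1}=\sum_{k=0}^{m-1}(-1)^k\binom{m-1}{k}\gamma_{2m-1-k}=(-1)^{m-1}\sum_{k=0}^{m-1}(-1)^k\binom{m-1}{k}\gamma_{m+k},
\]
so (\ref{Sm}) holds for every $m\ge 1$ if and only if $[x^{2m-1}]\,\varphi(x)(1-x)^{m-1}=0$ for every $m\ge 1$.

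For the ``if'' direction, write $\varphi(x)=\rho(y)=\sum_n\rho_n y^n$. By linearity, and since only finitely many $n$ can contribute to any fixed coefficient, it suffices to verify that each $y^n$ is symplectic. But $y^n(1-x)^{m-1}=x^{2n}(1-x)^{m-1-n}$, and the coefficient of $x^{2m-1}$ vanishes in both regimes: if $n\le m-1$, the right-hand side is a polynomial of degree $n+m-1<2m-1$; if $n\ge m$, it is a power series whose lowest term is $x^{2n}\ge x^{2m}$.

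For the ``only if'' direction, I would first establish that every $\varphi\in\C[\![x]\!]$ admits a unique decomposition $\varphi(x)=A(y)+x\,B(y)$ with $A,B\in\C[\![y]\!]$. The identity $x^2+yx-y=0$ shows that $x$ is integral over the discrete valuation ring $\C[\![y]\!]$, and since $\C[\![x]\!]/(y)=\C[x]/(x^2)$ has $\{1,x\}$ as a $\C$-basis, Nakayama's lemma makes $\C[\![x]\!]$ a free $\C[\![y]\!]$-module of rank $2$ on this basis. Granting the decomposition, the ``if'' direction says $A(y)$ is symplectic, so $\varphi$ is symplectic if and only if $xB(y)$ is. The analogous computation
\[
[x^{2m-1}]\,x\,y^n(1-x)^{m-1}=[x^{2m-2}]\,x^{2n}(1-x)^{m-1-n}=\delta_{n,m-1}
\]
then gives $[x^{2m-1}]\,xB(y)(1-x)^{m-1}=b_{m-1}$, so the symplectic conditions force $b_{m-1}=0$ for every $m$; hence $B=0$ and $\varphi=\rho(y)$ with $\rho:=A$.

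The main obstacle is producing the decomposition $\varphi=A(y)+x\,B(y)$; an alternative to the Nakayama argument is to solve for the coefficients of $A$ and $B$ inductively, noting that the equations for $\gamma_{2n}$ and $\gamma_{2n+1}$ form a triangular system determining $a_n$ and $b_n$ in terms of the lower-index pairs $(a_i,b_i)$. Once this decomposition is in hand, both directions of the proposition reduce to the transparent coefficient computation exploited above.
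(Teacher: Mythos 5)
Your argument is correct, and its ``only if'' half is organized genuinely differently from the paper's. The ``if'' half --- reformulating $(\operatorname{S}_m)$ as the vanishing of $[x^{2m-1}]\,(1-x)^{m-1}\varphi(x)$ and checking it for each $(x^2/(1-x))^n$ by the two cases $n\le m-1$ (degree $n+m-1<2m-1$) and $n\ge m$ (order $2n>2m-1$) --- is exactly the paper's first lemma, phrased via coefficient extraction instead of derivatives. For the converse, the paper works only with symplectic series: it introduces symplectic bases and inductively constructs the coefficients of $\rho$, using $(\operatorname{S}_{k+1})$ at each step to upgrade membership from $\mathfrak m^{2k+1}$ to $\mathfrak m^{2k+2}$. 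You instead prove a structural fact about all of $\C[\![x]\!]$, namely $\varphi=A(y)+xB(y)$ with $y=x^2/(1-x)$, and then observe that $(\operatorname{S}_m)$ applied to $xB(y)$ extracts exactly $b_{m-1}$ (your computation $[x^{2m-1}]\,x\,y^n(1-x)^{m-1}=\delta_{n,m-1}$ is right, and only finitely many $n$ contribute, so the interchange is legitimate); this kills $B$ and exhibits the constraints as a dual system to the odd part $x\,\C[\![y]\!]$, a nice extra insight. What the paper's formulation buys is generality: its lemma applies verbatim to any symplectic basis, e.g.\ the Euler--Genocchi one used later. One caveat on your decomposition: ordinary Nakayama requires knowing beforehand that $\C[\![x]\!]$ is a finitely generated $\C[\![y]\!]$-module, and integrality of $x$ only gives this for the subring $\C[\![y]\!][x]$, so as stated that justification has a gap; it is repaired either by the topological (complete) version of Nakayama, using $y\,\C[\![x]\!]=x^2\,\C[\![x]\!]$ so that the $(y)$-adic and $\mathfrak m$-adic topologies coincide, or more simply by your alternative triangular induction on the pairs $(a_n,b_n)$ --- which is sound, and is in effect the paper's induction run for an arbitrary rather than symplectic series. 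With that repair, every step checks out.
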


As a corollary, we obtain the following.

\begin{corollary}
\label{cor:Product}
The space of symplectic power series forms a subalgebra of $\C[\![x]\!]$. A meromorphic function
$\psi(t)$ is symplectic of order $d$ at $a\in\C$ if and only if there exists a formal power series  $\rho(y)\in  \C[\![y]\!]$ such that the Laurent expansion of  $\psi(t)$ at $t=a$ is
\[
    \frac{1}{(a-t)^d}\:\rho\!\left(\frac{(a-t)^2}{1-a+t}\right).
\]
If $\psi_1(t)$ is symplectic at $a\in\C$
of order $d_1$ and $\psi_2(t)$ is symplectic at $a\in\C$ of order $d_2$, then the product
$\psi_1(t)\psi_2(t)$ is symplectic at $a\in\C$ of order $d_1 + d_2$.
\end{corollary}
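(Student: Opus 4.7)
The plan is to derive all three assertions as direct consequences of Proposition \ref{FormalComposite}. Set $g(x) := x^2/(1-x) \in \C[\![x]\!]$. Since $g$ has vanishing constant term, substitution into $g$ defines a $\C$-algebra homomorphism $\Phi\co \C[\![y]\!] \to \C[\![x]\!]$, $\rho(y) \mapsto \rho(g(x))$. By Proposition \ref{FormalComposite}, the set of symplectic power series is exactly the image of $\Phi$; since the image of an algebra homomorphism is a subalgebra, this yields the first assertion.

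For the characterization of meromorphic $\psi$ symplectic at $a \in \C$, I would simply unfold Definition \ref{def:symplectic}: $\psi(t)$ is symplectic at $a$ of order $d$ if and only if $x^d\psi(a-x) \in \C[\![x]\!]$ is a symplectic formal power series, which by Proposition \ref{FormalComposite} is equivalent to the existence of $\rho(y) \in \C[\![y]\!]$ with $x^d\psi(a-x) = \rho(x^2/(1-x))$. The substitution $x = a - t$ converts this identity into the claimed Laurent form
\[
    \psi(t) = \frac{1}{(a-t)^d}\:\rho\!\left(\frac{(a-t)^2}{1-a+t}\right),
\]
and the implication is reversible, so this is an "if and only if".

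For the product statement, I would multiply the two Laurent expansions directly: writing $(a-t)^{d_i}\psi_i(t) = \rho_i\bigl((a-t)^2/(1-a+t)\bigr)$ for $i=1,2$, one obtains
\[
    (a-t)^{d_1+d_2}\psi_1(t)\psi_2(t) = (\rho_1\rho_2)\!\left(\frac{(a-t)^2}{1-a+t}\right),
\]
which is precisely of the form from the previous paragraph. Equivalently, under the substitution $x = a - t$, the series $(a-t)^{d_1+d_2}\psi_1(t)\psi_2(t)$ corresponds to the product of two symplectic power series in $x$, which is again symplectic by the subalgebra property already established.

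There is no genuine obstacle here: once Proposition \ref{FormalComposite} is in hand, the corollary collapses to the fact that the image of a ring homomorphism is a subring. The only minor point to verify is that $\rho \mapsto \rho(g)$ is a well-defined $\C$-algebra homomorphism of formal power series rings, which holds because $g(0) = 0$ ensures that each coefficient of $\rho(g(x))$ is a finite sum of coefficients of $\rho$.
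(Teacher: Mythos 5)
Your proof is correct and matches the paper's route: the paper states this result as an immediate consequence of Proposition \ref{FormalComposite}, with exactly the argument you spell out (symplectic series are the image of the substitution homomorphism $\rho(y)\mapsto\rho(x^2/(1-x))$, hence a subalgebra; the Laurent characterization and product statement follow by unfolding Definition \ref{def:symplectic} via $x=a-t$ and multiplying the two representations). Your explicit check that the substitution is a well-defined algebra homomorphism because $g(0)=0$ is a reasonable detail the paper leaves implicit.
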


It is tempting to think of $x^2/(1-x)$ as some sort of fundamental (rational or formal) invariant of a group action. In fact, the requisite transformation is provided by the order two M\"obius transformation $x\mapsto x/(x-1)$.

\begin{theorem} \label{TheoremMoebius}
A formal power series $\varphi(x)$ is symplectic if and only if it is invariant
under the substitution $x\mapsto x/(x-1)$. If  $\varphi(x)$ is rational, then the following
statements are equivalent:
\begin{enumerate}
\item $\varphi(x)$ is symplectic,
\item there exists a rational function $\rho(y)$ such that $\varphi(x)=\rho(x^2/(1-x))$,
\item  $\varphi(x)=\varphi(x/(x-1))$.
\end{enumerate}
\end{theorem}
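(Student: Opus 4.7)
The plan is to deduce part (1) from Proposition \ref{FormalComposite} together with an elementary valuation argument, and then to obtain part (2) by combining part (1) with the Galois theory of the degree-two extension $\C(x)/\C(y)$, where $y := x^2/(1-x)$.

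For part (1), let $\sigma(x) := x/(x-1)$. The forward direction is immediate from Proposition \ref{FormalComposite}: one checks by direct substitution that $\sigma$ fixes $y$, so every symplectic series $\varphi(x) = \rho(y)$ is automatically $\sigma$-invariant. For the reverse direction, the key observation is that $\sigma(x) = -x/(1-x) \equiv -x \pmod{x^2}$. Hence if $\varphi$ is a nonzero $\sigma$-invariant power series whose lowest-order term is $\gamma_n x^n$, then comparing leading terms on both sides of $\varphi(x) = \varphi(\sigma(x))$ gives $\gamma_n = (-1)^n \gamma_n$, so $n$ must be even. In particular, a $\sigma$-invariant series all of whose even-indexed coefficients vanish must be zero. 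Combined with the remark after Definition \ref{def:symplectic}, that every choice of even-indexed coefficients $(\gamma_0, \gamma_2, \gamma_4, \dots)$ determines a unique symplectic series, this closes the loop: given a $\sigma$-invariant $\varphi$, I choose the unique symplectic $\tilde\varphi$ with the same even-indexed coefficients, observe that $\tilde\varphi$ is $\sigma$-invariant by the forward direction, and conclude that $\varphi - \tilde\varphi$ is $\sigma$-invariant with all even-indexed coefficients zero, hence vanishes.

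For part (2), the equivalence (1)$\Leftrightarrow$(3) is part (1) specialized to the rational case. The implication (2)$\Rightarrow$(1) is Proposition \ref{FormalComposite}, after noting that a rational $\rho$ whose substitution $\rho(y)$ yields a formal power series in $x$ can have no pole at $y = 0$, so it admits a genuine Taylor expansion in $y$. For (1)$\Rightarrow$(2), I use that $x$ satisfies $t^2 + yt - y = 0$ over $\C(y)$, whose second root is $-y - x = x/(x-1) = \sigma(x)$. Thus $\C(x)/\C(y)$ is a degree-two Galois extension with Galois group $\{1, \sigma\}$ and fixed field $\C(y)$, so any $\sigma$-invariant rational function lies in $\C(y)$, i.e., equals $\rho(y)$ for some rational $\rho$. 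The main obstacle is the reverse direction of part (1); the leading-term comparison above sidesteps the need to unravel the infinite recursion that $\sigma$-invariance imposes on the Taylor coefficients, and the rest of the theorem reduces to standard observations.
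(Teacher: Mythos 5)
Your proof is correct, and for the rational half it takes a genuinely different route from the paper. For the first claim, the paper also reduces to the case where all even coefficients vanish by subtracting a suitable symplectic series, but then it expands $(x/(x-1))^n$ binomially and kills the remaining coefficients through the resulting recursion; your parity argument on the lowest-order term ($\sigma(x)\equiv -x \bmod x^2$, so a nonzero $\sigma$-invariant series has even-order leading term) reaches the same conclusion more cleanly and avoids the explicit recursion, while relying on the same fact from the remark after Definition \ref{def:symplectic} that any choice of even coefficients extends to a unique symplectic series. For the implication from $\sigma$-invariance to $\varphi=\rho(x^2/(1-x))$ with $\rho$ rational, the paper's Lemma \ref{lem:RatCompos} is an explicit factorization argument: it writes $\varphi$ as $Cx^k(x-1)^\ell(x-2)^m\prod(x-\lambda_i)^{n_i}$, matches factors under $x\mapsto x/(x-1)$, deduces parity constraints on the exponents, and reconstructs $\rho$ by hand. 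Your observation that $x$ satisfies $t^2+yt-y=0$ over $\C(y)$ with second root $\sigma(x)$, so that $\C(x)/\C(y)$ is a quadratic Galois extension whose fixed field is exactly $\C(y)$, replaces all of this with one application of the Galois correspondence; it is shorter and more conceptual, at the cost of being non-constructive (the paper's computation also exhibits $\rho$ explicitly), and your treatment of $(2)\Rightarrow(1)$ via regularity of $\rho$ at $y=0$ is fine. The only point you gloss over is the compatibility between the identity of rational functions in (3) and the formal-composition identity in part (1); the paper handles this by citing the substitution theorem for power series, and you should say a word to the same effect, but this is a routine remark rather than a gap.
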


\begin{corollary}
\label{CorRatSympOrderD}
A rational function $\psi(t)$ is symplectic of order $d$ at $t=a$ if and only if
\begin{eqnarray}\label{FuncEq t=a}
\psi\!\left(\frac{a^2-2a+(1-a)t}{a-1-t}\right)=(a-1-t)^d\:\psi(t).
\end{eqnarray}
\end{corollary}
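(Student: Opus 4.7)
The plan is to deduce the corollary from Theorem \ref{TheoremMoebius} by applying it to the rational formal power series $\varphi(x) := x^d\, \psi(a-x)$. Since $\psi$ is rational with pole order at most $d$ at $t=a$, the series $\varphi(x)$ lies in $\C[\![x]\!]$ and is itself rational. Moreover, by Definition \ref{def:symplectic}, the assertion that $\psi$ is symplectic of order $d$ at $a$ is by definition the assertion that $\varphi$ is symplectic. The equivalence (1) $\Leftrightarrow$ (3) of Theorem \ref{TheoremMoebius} then reduces the claim to the M\"obius invariance $\varphi(x) = \varphi(x/(x-1))$.

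The remaining work is to translate this invariance into the functional equation \eqref{FuncEq t=a} via the change of variables $t = a - x$. I would first expand $\varphi(x/(x-1)) = (x/(x-1))^d\, \psi(a - x/(x-1))$, use the elementary identity $a - x/(x-1) = ((a-1)x - a)/(x-1)$, and then substitute $x = a - t$. A short computation shows that $x - 1 = a - 1 - t$ and that $((a-1)x - a)/(x-1)$ rewrites as $(a^2 - 2a + (1-a)t)/(a-1-t)$; hence $\varphi(x/(x-1))$ becomes $((a-t)/(a-1-t))^d\, \psi((a^2 - 2a + (1-a)t)/(a-1-t))$, while $\varphi(x) = (a-t)^d\, \psi(t)$.

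Setting these two expressions equal, cancelling the common factor $(a-t)^d$, and clearing the denominator $(a-1-t)^d$ yields precisely \eqref{FuncEq t=a}. Since every step in the translation is reversible, this establishes the desired equivalence.

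The only real obstacle is computational bookkeeping in the change of variables. In particular one must track carefully that $a - 1 - t$ equals $x - 1$ and not $-(x-1)$; this sign is exactly what causes the factor $(a-1-t)^d$ to land on the right-hand side of \eqref{FuncEq t=a} with the stated exponent, so that the resulting functional equation really encodes the M\"obius invariance of $\varphi$ rather than a twisted variant.
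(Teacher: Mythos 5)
Your proposal is correct and is essentially the paper's (implicit) argument: the corollary is exactly the translation of Theorem \ref{TheoremMoebius}, equivalence (1)$\Leftrightarrow$(3), applied to $\varphi(x)=x^d\psi(a-x)$ under the substitution $x=a-t$, and your bookkeeping (in particular $a-\frac{x}{x-1}=\frac{(a-1)x-a}{x-1}$ and $x-1=a-1-t$) checks out. Like the paper, you retain the standing hypothesis from Definition \ref{def:symplectic} that the pole order of $\psi$ at $t=a$ is at most $d$, which is needed for $\varphi\in\C[\![x]\!]$.
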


This type of functional equation one encounters in the theory of \emph{Gorenstein algebras} (cf. \cite[Section 4.4]{BrunsHerzog}). Namely, by a theorem of Richard P. Stanley \cite{Stanley}, an $\N$-graded Cohen-Macaulay algebra $R=\oplus_{i\ge 0}R_i$ is Gorenstein if and only if its Hilbert series
$\Hilb_R(t)=\sum_{i\ge 0} \dim(R_i)\:t^i$ fulfills
\begin{eqnarray}
\Hilb_R(t^{-1})=(-1)^d t^{-a(R)} \Hilb_R(t),
\end{eqnarray}
where $d=\dim(R)$ and $a(R)$ is the so-called a-invariant. By comparison with \eqref{FuncEq t=a} for $a=1$ we finally obtain the following result.

\begin{corollary} The Hilbert series $\Hilb_R(t)$ of a graded Cohen-Macaulay algebra
 $R$ is symplectic of order $d=\dim R$ if and only if $R$ is Gorenstein with a-invariant $a(R)=-d$.
\end{corollary}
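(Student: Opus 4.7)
The plan is to combine Stanley's characterization of Gorenstein Cohen-Macaulay algebras (recalled immediately above the corollary) with the rational-function criterion in Corollary \ref{CorRatSympOrderD}, and to observe that when specialized to $a=1$ the two functional equations coincide precisely under the condition $a(R)=-d$.

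First, I would invoke Corollary \ref{CorRatSympOrderD} at $a=1$. Substituting $a=1$ into the equation
\[
\psi\!\left(\frac{a^2-2a+(1-a)t}{a-1-t}\right)=(a-1-t)^d\:\psi(t)
\]
the numerator becomes $-1$, the denominator becomes $-t$, and the right-hand factor becomes $(-t)^d$. Therefore $\psi(t)$ is symplectic of order $d$ at $t=1$ if and only if
\[
\psi(t^{-1})=(-1)^d\, t^d\, \psi(t).
\]
Since $\Hilb_R(t)$ is a rational function (this is automatic for a finitely generated graded Cohen--Macaulay algebra), this criterion applies verbatim to $\psi=\Hilb_R$.

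Next, I would compare this identity to Stanley's: a graded Cohen--Macaulay algebra $R$ is Gorenstein if and only if
\[
\Hilb_R(t^{-1})=(-1)^d\, t^{-a(R)}\, \Hilb_R(t),
\]
where $d=\dim R$. If $R$ is Gorenstein with $a(R)=-d$, the Stanley identity immediately becomes $\Hilb_R(t^{-1})=(-1)^d t^d\Hilb_R(t)$, which is exactly the symplecticity criterion above, so $\Hilb_R$ is symplectic of order $d$ at $t=1$. Conversely, if $\Hilb_R$ is symplectic of order $d$, the symplecticity criterion yields a functional equation of Stanley's form with the exponent of $t$ equal to $d$, so the Gorenstein criterion is satisfied with $a(R)=-d$. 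Here one uses that $\Hilb_R(t)\neq 0$, so the exponent of $t$ in any such relation is uniquely determined.

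The only point requiring a bit of care is this last uniqueness: one must ensure that the exponent of $t$ in a relation of the form $\Hilb_R(t^{-1})=\varepsilon\, t^{n}\, \Hilb_R(t)$ is determined by $\Hilb_R$. This follows because $\Hilb_R(t)$ is a nonzero rational function, so equating two such relations forces $t^{n_1}=t^{n_2}$ and hence $n_1=n_2$. I do not expect any genuine obstacle; the proof is essentially a matching of coefficients between the symplecticity functional equation at $a=1$ and Stanley's Gorenstein criterion.
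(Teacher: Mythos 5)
Your route is the same as the paper's: specialize the functional equation \eqref{FuncEq t=a} of Corollary \ref{CorRatSympOrderD} to $a=1$, obtaining $\Hilb_R(t^{-1})=(-1)^d\,t^d\,\Hilb_R(t)$, and compare with Stanley's criterion; your computation of the specialization and the forward implication (Gorenstein with $a(R)=-d$ implies symplectic of order $d$) are correct and complete.

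The converse, however, has a circularity as written. Stanley's theorem, in the form quoted in the paper, is an equivalence between Gorenstein-ness and the functional equation with the \emph{specific} exponent $-a(R)$, where $a(R)$ is an invariant of $R$ defined independently of any such relation. From $\Hilb_R(t^{-1})=(-1)^d t^{d}\Hilb_R(t)$ you cannot directly conclude that "the Gorenstein criterion is satisfied with $a(R)=-d$" unless you already know $d=-a(R)$. Your uniqueness-of-exponent remark does not supply this: it shows that two relations $\Hilb_R(t^{-1})=(-1)^d t^{n_i}\Hilb_R(t)$ must have $n_1=n_2$, but the relation with exponent $-a(R)$ is only guaranteed to hold once $R$ is known to be Gorenstein, which is what you are trying to prove. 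The gap is easily closed: either invoke Stanley's theorem \cite{Stanley} in its "for some integer exponent" formulation (in which case that exponent is then identified with $-a(R)$), or use the standard fact that for a graded Cohen--Macaulay algebra $a(R)=\deg\Hilb_R(t)$, the degree as a rational function \cite{BrunsHerzog}. Indeed, since $\Hilb_R(0)=\dim_\R R_0\neq 0$, letting $t\to\infty$ in $\Hilb_R(t^{-1})=(-1)^d t^{n}\Hilb_R(t)$ shows the left-hand side has degree $0$ while the right-hand side has degree $n+\deg\Hilb_R(t)$, so any such relation forces $n=-\deg\Hilb_R(t)=-a(R)$; with this, your exponent $d$ is automatically $-a(R)$, and both Gorenstein-ness and $a(R)=-d$ follow as you intended.
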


\begin{remark}
In particular, this implies that if the graded ring $R=\oplus_{i\ge 0} R_i$  is Gorenstein of Krull dimension $d$ and in the Laurent expansion
\begin{eqnarray}\label{eq:LaurentExpansion}
\Hilb_{R}(t)=\sum_{i\ge 0}{\gamma_i\over (1-t)^{d-i}},
\end{eqnarray}
the coefficient $\gamma_1=0$, then $\Hilb_{R}(t)$ is symplectic of order $d$. Here we make use of the fact \cite[Equation (3.32)]{PopovVinberg} that $-2\gamma_1/\gamma_0=a(R)+d$.
\end{remark}

Let us give an outline of the paper. In Section \ref{sec:FormalComposite} we prove Proposition \ref{FormalComposite} and, as a side remark, discuss relations to the sequence of Genocchi numbers. In Section \ref{sec:MoebiusInvariance} we use Proposition \ref{FormalComposite} to show Theorem \ref{TheoremMoebius}. The latter is used in Section
\ref{sec:Torus} to give a proof of our main result, Theorem \ref{TheoremTorus}, that is based on Molien's formula and the fact that a moment map of a faithful torus representation forms a regular sequence in the ring of invariants \cite{HerbigIyengarPflaum,FarHerSea}. In Section  \ref{sec:Euler} we deduce from Corollary \ref{cor:Product} an identity for the coefficients of the even Euler polynomials. In Section \ref{sec:Calculations} we illustrate our results by discussing specific examples.

\subsection*{Acknowledgements}

We would like to thank Srikanth Iyengar for suggesting that condition (\ref{Sm}) might be fulfilled termwise in the Molien formula for a finite unitary group.  The third author would like to thank Eric Gottlieb for helpful conversations.


\section{Proof of Proposition \ref{FormalComposite}}
\label{sec:FormalComposite}

As, for each  $i\ge 1$, the alternating sum over the $i$th row of the Pascal triangle is zero, the power series $x^2/(1-x)$ is symplectic. Based on this observation we are able to find more examples.

\begin{lemma} For each $n\ge 0$ the formal power series $(x^2/(1-x))^n$ is symplectic.
\end{lemma}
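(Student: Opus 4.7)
The plan is to verify the symplectic relations $(\mathrm{S}_m)$ directly, by computing the coefficients of $(x^2/(1-x))^n$ explicitly and then recognizing each $(\mathrm{S}_m)$ as the vanishing of an iterated forward difference. The case $n=0$ is immediate, so I focus on $n \ge 1$.

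First, I would expand $(x^2/(1-x))^n = x^{2n}(1-x)^{-n}$ via the generalized binomial series to obtain
\[
    \gamma_i \;=\; \binom{i-n-1}{n-1}\qquad\text{for } i\ge 2n,\qquad \gamma_i = 0 \text{ for } i< 2n.
\]
The key observation is that, under the standard convention $\binom{a}{b}=0$ whenever $0\le a<b$, the formula $\gamma_i=\binom{i-n-1}{n-1}$ remains valid in the ``gap'' $n+1\le i\le 2n-1$, where both sides vanish. Hence $\gamma_i = \binom{i-n-1}{n-1}$ for all $i\ge n+1$, a polynomial of degree $n-1$ in $i$.

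Second, I would rewrite $(\mathrm{S}_m)$ as a finite-difference condition. With the forward-difference operator $\Delta f(j) = f(j+1)-f(j)$, one has $\Delta^{m-1}f(m) = \sum_{k=0}^{m-1}(-1)^{m-1-k}\binom{m-1}{k}f(m+k)$, so the sum in $(\mathrm{S}_m)$ equals $(-1)^{m-1}\Delta^{m-1}\gamma_m$. The lemma therefore reduces to proving $\Delta^{m-1}\gamma_m = 0$ for every $m\ge 1$.

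Third, I would split into two regimes. If $1\le m\le n$, every index $m+k$ with $0\le k\le m-1$ lies in $[1,2n-1]$, so each $\gamma_{m+k}$ is zero and the sum vanishes trivially. If $m\ge n+1$, then every index $m+k$ satisfies $m+k\ge n+1$, so throughout the sum $\gamma_{m+k}=\binom{m+k-n-1}{n-1}$; viewed as a function of $j = m+k$ this is a polynomial of degree $n-1$. Since $\Delta^{m-1}$ annihilates any polynomial of degree $\le m-2$ and here $n-1\le m-2$, we conclude $\Delta^{m-1}\gamma_m=0$.

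I expect the only real subtlety to be the unified treatment across the zero-gap $n+1\le i\le 2n-1$: without the polynomial extension via the standard binomial convention, one would be forced to split the iterated difference into pieces involving two different ``formulas'' for $\gamma_i$, making the bookkeeping unpleasant. Once that observation is in place, the proof reduces to the classical fact that finite differences of sufficiently high order kill polynomials of low degree.
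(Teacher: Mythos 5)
Your argument is correct. The case $n=0$ is trivial, the explicit expansion $\gamma_i=\binom{i-n-1}{n-1}$ for $i\ge 2n$ (and $\gamma_i=0$ for $i<2n$) is right, and your key observation that this binomial formula continues to hold throughout the gap $n+1\le i\le 2n-1$ is sound: for integer arguments $a=i-n-1\ge 0$ the binomial coefficient $\binom{a}{n-1}$ agrees with the degree-$(n-1)$ polynomial $a(a-1)\cdots(a-n+2)/(n-1)!$, which vanishes there because the falling factorial contains a zero factor, so $\gamma_i$ is genuinely a polynomial in $i$ of degree $n-1$ on all of $i\ge n+1$ (and you correctly stop at $n+1$, since the formula fails at $i=n$). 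The identification of the $(\operatorname S_m)$ sum with $(-1)^{m-1}\Delta^{m-1}\gamma_m$ and the two regimes $m\le n$ (all terms in the zero range) and $m\ge n+1$ (indices all $\ge n+1$, so $\Delta^{m-1}$ kills a polynomial of degree $n-1\le m-2$) are both handled correctly. Your route is the coefficient-level, finite-difference counterpart of the paper's proof: the paper instead reformulates $(\operatorname S_m)$ as the vanishing at $x=0$ of the $(2m-1)$st derivative of $(1-x)^{m-1}\varphi(x)$, and then notes that $f_{n,m}(x)=(1-x)^{m-1}(x^2/(1-x))^n$ either vanishes to order $2n\ge 2m$ at $0$ (when $m\le n$) or is a polynomial of degree $n+m-1<2m-1$ (when $m>n$). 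The two proofs share the same dichotomy in $m$ versus $n$, but the paper's version avoids computing any coefficients explicitly, while yours stays entirely with the sequence $(\gamma_i)$ and the classical fact that high-order differences annihilate low-degree polynomials; the price you pay is exactly the bookkeeping across the zero gap, which you handle correctly via the binomial convention.
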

\begin{proof} First let us observe that for a formal power series $\varphi(x)$ we have
\begin{eqnarray*}
\eqref{Sm}\quad\Longleftrightarrow \quad\left.{d^{2m-1}\over d^{2m-1}x}\right|_{x=0}\Big((1-x)^{m-1}\varphi(x)\Big)=0.
\end{eqnarray*}
Let us introduce the shorthand notation $f_{n,m}(x):=(1-x)^{m-1}\left(x^2/(1-x)\right)^n$. The rational function $f_{n,m}(x)$ is regular at $x=0$ and vanishes there to the order $2n$. So if $m\le n$, then
\[f_{n,m}^{(2m-1)}(0)=0.\]
On the other hand, if $m> n$, then $f_{n,m}(x)$ is a polynomial of degree $n+m-1<2m-1$ and hence the $(2m-1)$-fold derivative of $f_{n,m}$ vanishes identically.
\end{proof}

It will be convenient to introduce some terminology.

\begin{definition}
\label{def:SympBasis}
By a \emph{symplectic basis} we mean  a
sequence $(\varphi_n(x))_{n\in \N}$ of symplectic power series $\varphi_n(x)\in \C[\![x]\!]$
such that each $\varphi_n(x)\in\mathfrak m^{2n}$ and its class in $\mathfrak m^{2n}/\mathfrak m^{2n+1}$ is nonzero. Here $\mathfrak m$ denotes the maximal ideal $x\,\C[\![x]\!]$
of the complete local ring $\C[\![x]\!]$.
\end{definition}

\begin{lemma}\label{lem:m-adic}
Let $(\varphi_n(x))_{n\in \N}$ be a symplectic basis. Then for each symplectic power series
$\varphi(x)$ there exists a unique sequence $(a_n)_{n\in\N}$ of numbers such that
for each $k\ge 0$
\begin{eqnarray}\label{eq:finiteApprox}
\varphi(x)-\sum_{i=0}^k a_i\, \varphi_i(x)\in \mathfrak m^{2k+2}.
\end{eqnarray}
It follows that $\varphi(x)=\sum_{i\ge 0} a_i\, \varphi_i(x)$, where the sum converges in
the $\mathfrak m$-adic topology of $\C[\![x]\!]$.
\end{lemma}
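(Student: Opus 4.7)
The plan is to construct the coefficients $a_n$ inductively and to use the fact that a linear combination of symplectic power series is again symplectic. Write $\varphi_n(x)=c_n x^{2n}+\text{higher order}$ with $c_n\neq 0$.

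\textbf{Inductive construction.} Suppose inductively that $a_0,\dots,a_{k-1}$ have already been chosen so that $\psi_{k-1}(x):=\varphi(x)-\sum_{i=0}^{k-1}a_i\,\varphi_i(x)\in\mathfrak m^{2k}$. Since each $\varphi_i$ is symplectic and the symplectic condition \eqref{Sm} is linear, $\psi_{k-1}$ is symplectic as well. Write $\psi_{k-1}(x)=\delta_{2k}x^{2k}+\delta_{2k+1}x^{2k+1}+\cdots$, and set
\[
    a_k:=\delta_{2k}/c_k,\qquad \psi_k(x):=\psi_{k-1}(x)-a_k\,\varphi_k(x).
\]
Then $\psi_k$ is again symplectic and lies in $\mathfrak m^{2k+1}$ by construction.

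\textbf{Gaining one extra degree from the symplectic condition.} The nontrivial point is that $\psi_k$ automatically lies in $\mathfrak m^{2k+2}$. Writing $\psi_k(x)=\sum_{i\ge 0}\delta_i' x^i$ we have $\delta_i'=0$ for $i\le 2k$. Apply the constraint $(\mathrm S_{k+1})$, which involves only the coefficients $\delta_{k+1}',\dots,\delta_{2k+1}'$: all of $\delta_{k+1}',\dots,\delta_{2k}'$ are zero, so the relation collapses to $(-1)^k\binom{k}{k}\delta_{2k+1}'=0$, i.e.\ $\delta_{2k+1}'=0$. Hence $\psi_k\in\mathfrak m^{2k+2}$, completing the inductive step and establishing \eqref{eq:finiteApprox}.

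\textbf{Uniqueness and convergence.} Uniqueness of $(a_n)$ is immediate by comparing the $x^{2k}$-coefficients: if $\varphi(x)-\sum_{i=0}^{k}a_i'\,\varphi_i(x)\in\mathfrak m^{2k+2}$ as well, then subtraction yields $\sum_{i=0}^{k}(a_i-a_i')\varphi_i(x)\in\mathfrak m^{2k+2}$, and induction on $k$ together with $c_k\neq 0$ forces $a_i=a_i'$ for all $i$. Finally, since $a_i\varphi_i\in\mathfrak m^{2i}$, the partial sums form a Cauchy sequence in the $\mathfrak m$-adic topology of $\C[\![x]\!]$, and the bound \eqref{eq:finiteApprox} shows that their limit is $\varphi(x)$.

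The only place where something subtle happens is the extra-degree step, where one must realize that the symplectic relation $(\mathrm S_{k+1})$ is precisely the one that links a block of coefficients starting at $x^{k+1}$ and ending at $x^{2k+1}$, so that vanishing of the earlier $\delta_j'$ forces vanishing of the odd coefficient $\delta_{2k+1}'$ for free; this is exactly the mechanism mentioned after Definition~\ref{def:symplectic} saying that odd coefficients are determined by even ones.
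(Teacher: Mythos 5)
Your proof is correct and follows essentially the same route as the paper: an inductive choice of $a_k$ using the nonvanishing leading coefficient of $\varphi_k$ in $\mathfrak m^{2k}/\mathfrak m^{2k+1}$, combined with the key observation that a symplectic series lying in $\mathfrak m^{2k+1}$ automatically lies in $\mathfrak m^{2k+2}$ because $(\mathrm S_{k+1})$ collapses to $(-1)^k\binom{k}{k}\delta_{2k+1}'=0$. Your added details on uniqueness and $\mathfrak m$-adic convergence are fine and merely make explicit what the paper leaves implicit.
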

\begin{proof} We start with a preparatory observation. Suppose that $k\ge 0$ and $f(x)=\sum_{i\ge 0}\alpha_i\,x^i$ is symplectic and in $\mathfrak m^{2k+1}$, i.e., $\alpha_0=\alpha_1=\dots=\alpha_{2k}=0$.  Then $(\operatorname S_{k+1})$ implies that $\alpha_{2k+1}=0$ as well, that is $f(x)\in\mathfrak m^{2k+2}$.

Assume now for induction that
\[\varphi(x)-\sum_{i=0}^{k-1} a_i\, \varphi_i(x)\in \mathfrak m^{2k}.\]
It follows that there is a unique number $a_k$ such that $\varphi(x)-\sum_{i=0}^{k} a_i\, \varphi_i(x)\in \mathfrak m^{2k+1}$. Since the latter series is symplectic, the above argument tells us that it is in fact in $\mathfrak m^{2k+2}$.
\end{proof}

As a consequence, with the choice of the symplectic basis
\begin{eqnarray}
\label{eq:rationalSymplectic}
\left(\left({x^2\over 1-x}\right)^n\right)_{n\in \N}
\end{eqnarray}
we can write each symplectic series $\varphi(x)$ as a formal composite $\rho(x^2/(1-x))$, where $\rho(y)=\sum_{i\ge 0} a_i\,y^i\in \C[\![y]\!]$. This proves Proposition \ref{FormalComposite}.

\begin{remark}\label{rmk:Genocchi}
There are of course plenty of other symplectic bases. In fact, any symplectic power series
$\varphi_1(x)$ that is in $\mathfrak m^2$ and whose class in $\mathfrak m^2/\mathfrak m^3$
does not vanish generates a symplectic basis $\left((\varphi_1(x))^n\right)_{n\in\N}$.
A choice different from $x^2/(1-x)$ is provided by the sequence of \emph{Genocchi numbers}.
The sequence of Genocchi numbers $(G_n)_{n\in\N}$ (cf. entry A036968 in the online encyclopedia \cite{OEIS}) is defined by the exponential generating function
\begin{eqnarray*}
\frac{2z}{e^z+1}&=&\sum_{n\ge 0}G_n\frac{z^n}{n!}\\
&=&z-\frac{z^2}{2!}+\frac{z^4}{4!}-\frac{3z^6}{6!}+\frac{17z^8}{8!}-\frac{155z^{10}}{10!}+\frac{2073z^{12}}{12!}-\dots\quad \in \C[\![z]\!].
\end{eqnarray*}
Setting $\operatorname{Gen}(x):=\sum_{i\ge 0}G_{n+1}x^n$, it follows from \cite{GesselUmbral} (see also Section \ref{sec:Euler}) that
\begin{eqnarray}\label{eq:phi1}
\varphi_1(x):=x^2\operatorname{Gen}(-x)
\end{eqnarray} is symplectic, and hence generates a symplectic basis as described above.
As $G_n = O(n!/\pi^n)$, $\operatorname{Gen}(x)$ as well as $\varphi_1(x)$ cannot be rational.
Also note that the only even monomial occurring in the expansion of $\varphi_1(x):=x^2\operatorname{Gen}(-x)$ is $x^2$.
\end{remark}

\section{Proof of Theorem \ref{TheoremMoebius}}
\label{sec:MoebiusInvariance}

First let us prove that a formal power series $\varphi(x)=\sum_{i\ge 0}\gamma_i \,x^i$ is symplectic if and only if
\begin{eqnarray}\label{eq:invariance}
\varphi(x)=\varphi(x/(x-1)).
\end{eqnarray}
The implication $\Longrightarrow$
is a consequence of Proposition
\ref{FormalComposite}. Conversely, let us assume that $\varphi(x)$ fulfills  Equation \eqref{eq:invariance}. Using the identity
\[\left({x\over x-1}\right)^n=\sum_{i\ge 0}(-1)^n{n+i-1\choose n-1}x^{n+i},\]
for $n\ge 1$, we see that  Equation \eqref{eq:invariance} is tantamount to
\begin{eqnarray}\label{eq:recursion}
\gamma_m=\sum_{i=1}^m(-1)^n{m-1\choose n-1}\gamma_n
\end{eqnarray}
for all $m\ge 1$. Without loss of generality we can assume that $\gamma_{2n}=0$ for all $n\ge 0$. This can be achieved by subtracting a suitable symplectic power series. With this assumption it follows recursively from \eqref{eq:recursion} that $\gamma_n=0$ for all $n\ge 0$. Since $\varphi(x)=0$ is symplectic, this shows implication $\Longleftarrow$.

This establishes the first claim of Theorem \ref{TheoremMoebius}.  The rest of the statement will follow from the following.

\begin{lemma}
\label{lem:RatCompos}
Let $\varphi(x)=P(x)/Q(x)$ be a rational symplectic function.
Then there exists a rational function $\rho(y)$ such that
$\varphi(x)=\rho(x^2/(1-x))$.
\end{lemma}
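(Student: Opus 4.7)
My plan is to convert the $\sigma$-invariance of $\varphi$, for $\sigma: x\mapsto x/(x-1)$, into a statement about the fixed field of $\sigma$ acting on $\C(x)$, and then identify that fixed field with $\C(x^2/(1-x))$ by a short Galois calculation. The first half of Theorem \ref{TheoremMoebius}, proved in the preceding paragraphs, already gives $\varphi(x)=\varphi(x/(x-1))$ as formal power series. Since $\varphi$ is rational and regular at the origin and $x/(x-1)\in\mathfrak m$, this formal identity lifts to an equality in $\C(x)$: the two sides have the same germ at $0$, and a rational function is determined by its germ. Thus $\varphi$ lies in the fixed field $\C(x)^\sigma$ of the involutive automorphism $\sigma$ of $\C(x)$.

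Next, set $u:=x^2/(1-x)$. A direct substitution verifies $\sigma(u)=u$, so $\C(u)\subseteq \C(x)^\sigma$. The key remaining step is to upgrade this to equality, for which it suffices to show $[\C(x):\C(u)]=2$. But $x$ is a root of the quadratic $T^2+uT-u\in\C(u)[T]$, whose discriminant $u^2+4u=u(u+4)$ is squarefree in the UFD $\C[u]$ and hence is not a square in $\C(u)$; therefore the polynomial is irreducible and $[\C(x):\C(u)]=2$. Since $\sigma$ is a nontrivial involution of $\C(x)$ fixing $\C(u)$, the tower $\C(u)\subseteq \C(x)^\sigma\subsetneq\C(x)$ collapses, by multiplicativity of degrees, to $\C(x)^\sigma=\C(u)$.

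Combining these two paragraphs, $\varphi\in\C(u)=\C(x^2/(1-x))$, so $\varphi(x)=\rho(x^2/(1-x))$ for some rational function $\rho$, as claimed. The only nontrivial step in this plan is the brief irreducibility/degree check for $T^2+uT-u$ over $\C(u)$; everything else is a formal consequence of $\sigma$-invariance and a standard argument about fixed fields under an order-two automorphism.
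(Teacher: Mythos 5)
Your proof is correct, but it takes a genuinely different route from the paper's. Both arguments begin the same way: the formal invariance $\varphi(x)=\varphi(x/(x-1))$ from the first half of Theorem \ref{TheoremMoebius} is upgraded to an identity of rational functions (the paper cites Apostol's substitution theorem; your germ argument is the same point and is legitimate because $\varphi$ is regular at $0$ and $x/(x-1)$ maps $0$ to $0$). From there the paper proceeds concretely: it factors $\varphi(x)=Cx^k(x-1)^\ell(x-2)^m\prod_i(x-\lambda_i)^{n_i}$, compares this with the factorization of $\varphi(x/(x-1))$, pairs each zero/pole $\lambda$ with $\lambda/(\lambda-1)$, deduces that $k$ and $m$ are even and that $\ell$ is forced by the remaining exponents, and then exhibits $\rho$ explicitly as a product of factors such as $-\frac{x^2}{1-x}-\frac{\mu_j^2}{\mu_j-1}$. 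You instead note that $\sigma\colon x\mapsto x/(x-1)$ is an involutive automorphism of $\C(x)$, that $u=x^2/(1-x)$ satisfies $\sigma(u)=u$, that $[\C(x):\C(u)]=2$ (your irreducibility check of $T^2+uT-u$ via the non-squareness of $u(u+4)$ is fine; equivalently one can quote the standard fact that the degree of the extension equals the degree of the rational map $u$), and conclude from the degree tower and $x\notin\C(x)^\sigma$ that $\C(x)^\sigma=\C(u)$, so every $\sigma$-invariant rational function is a rational function of $u$. Your fixed-field argument is shorter, more conceptual, and generalizes at once to any finite group of M\"obius transformations; what it gives up is the explicit description of $\rho$ in terms of the zeros and poles of $\varphi$, which the paper's computation yields and which matches the explicit $\rho$'s appearing in Section \ref{sec:Calculations}. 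I see no gaps in your argument.
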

\begin{proof}
Assume that $\varphi$ is nonzero, and then we may express
\begin{equation}
\label{eq:fDef}
    \varphi(x) = C x^k (x - 1)^\ell (x - 2)^m \prod\limits_{i=1}^r (x - \lambda_i)^{n_i}
\end{equation}
where $C \in \C^\times$, each $\lambda_i \in \C\smallsetminus\{0,1,2\}$, $r$ is a nonnegative
integer, and  $k$, $\ell$, $m$, and $n_i$ for $i = 1,\ldots, r$ are integers.  Let
$q = \deg(Q(x))-\deg(P(x))$, and then we have
\begin{equation}
\label{eq:dqDef}
    q = -k - \ell - m - \sum\limits_{i=1}^r n_i.
\end{equation}
By a simple computation,
\begin{equation}
\label{eq:fSub}
    \varphi\left( \frac{x}{x-1} \right)
    =
    C (-1)^m x^k (x - 1)^{q} (x - 2)^m
    \prod\limits_{i=1}^r (1 - \lambda_i)^{n_i}
    \left( x - \frac{\lambda_i}{\lambda_i - 1}\right)^{n_i}.
\end{equation}
We have that $\varphi(x)$ is symplectic by hypothesis so that by the substitution theorem
\cite[Theorem 9.25]{ApostolAnalysis}, Equation \eqref{eq:invariance} holds for
$\varphi(x)$.  Hence, a comparison of Equations \eqref{eq:fDef} and \eqref{eq:fSub} yields
\begin{align}
    \label{eq:fSubCons1}
    (-1)^m \prod\limits_{i=1}^r (1 - \lambda_i)^{n_i}
    &=  1,
    \\
    \label{eq:fSubCons2}
    q      &=      \ell,
    \quad\quad\mbox{and}\\
    \label{eq:fSubCons3}
    \prod\limits_{i=1}^r \left( x - \frac{\lambda_i}{\lambda_i - 1}\right)^{n_i}
    &=
    \prod\limits_{i=1}^r (x - \lambda_i)^{n_i}.
\end{align}
From Equation \eqref{eq:fSubCons3}, we have that for each factor
$x - \lambda_i$ in $\prod_{i=1}^r (x - \lambda_i)^{n_i}$, a factor
$x - \lambda_i/(\lambda_i - 1)$ must also appear.  Hence we may rewrite
\begin{equation}
\label{eq:fsubMuFactors}
    \prod\limits_{i=1}^r (x - \lambda_i)^{n_i}
    =
    \prod\limits_{j=1}^{r^\prime} (x - \mu_j)^{n_j^\prime}
    \left( x - \frac{\mu_j}{\mu_j - 1}\right)^{n_j^\prime}
\end{equation}
for a nonnegative integer $r^\prime$, nonnegative integers $n_j^\prime$
and $\mu_j \in \C\smallsetminus \{0,1,2\}$ for $j=1,\ldots, r^\prime$.
Combining Equations \eqref{eq:fSubCons1} and \eqref{eq:fsubMuFactors} and observing
that for each $j$, $(1 - \mu_j)(1 - \mu_j/(\mu_j - 1)) = 1$, we obtain
\[
    1
    =
    (-1)^m \prod\limits_{j=1}^{r^\prime} (1 - \mu_j)^{n_j^\prime}
    \left( 1 - \frac{\mu_j}{\mu_j - 1}\right)^{n_j^\prime}
    =
    (-1)^m
\]
so that $m = 2m^\prime$ for some $m^\prime \in \Z$.  Similarly, Equations
\eqref{eq:dqDef} and \eqref{eq:fSubCons2} can now be used to express
\[
    k = - 2\ell - 2m^\prime - \sum\limits_{j=1}^{r^\prime} 2n_j^\prime
\]
so that $k = 2k^\prime$ for some $k^\prime \in \Z$, and then
we have
\[
    \ell =  -k^\prime - m^\prime - \sum\limits_{j=1}^{r^\prime} n_j^\prime.
\]
Substituting \eqref{eq:fsubMuFactors} into \eqref{eq:fDef} and applying the
above observations yields
\begin{align*}
    \varphi(x)
    &=
    C x^{2k^\prime} (x - 1)^{-k^\prime - m^\prime - \sum_{j=1}^{r^\prime} n_j^\prime}
    (x - 2)^{2m^\prime}\prod\limits_{j=1}^{r^\prime} (x - \mu_j)^{n_j^\prime}
    \left( x - \frac{\mu_j}{\mu_j - 1}\right)^{n_j^\prime}
    \\&=
    C \left(- \frac{x^2}{1-x}\right)^{k^\prime}
    \left(-\frac{x^2}{1-x} - 4\right)^{m^\prime}
    \prod\limits_{j=1}^{r^\prime} \left( - \frac{x^2}{1 - x} - \frac{\mu_j^2}{\mu_j - 1}\right)^{n_j^\prime},
\end{align*}
a rational function of $x^2/(1-x)$, completing the proof.
\end{proof}


\section{Proof of Theorem \ref{TheoremTorus}}
\label{sec:Torus}

In this section, we let $G = \T^\ell = (\Sp^1)^\ell$, let $V$ be a unitary representation
of $G$ with $\dim_\C V = n$, and let $M_0$ denote the corresponding symplectic quotient.
We choose a (complex) basis for $V$ with respect to which the $G$-action
is diagonal, and then the action of $G$ is described by a \emph{weight matrix}
$A \in \Z^{\ell\times n}$. Specifically, we let $\bs{z} := (z_1, \ldots, z_\ell) \in G$ with
each $z_i \in \Sp^1$ and introduce the notation
$\bs{z}^{\bs{a}_j} := z_1^{a_{1,j}} z_2^{a_{2,j}} \cdots z_\ell^{a_{\ell,j}}$ for each
$j=1,\ldots,n$.  Then the action of $\bs{z}$ on $V$ as a unitary transformation is given with respect to this basis by
\[
    \bs{z}\mapsto
    \diag(\bs{z}^{\bs{a}_1}, \ldots, \bs{z}^{\bs{a}_n}).
\]
Concatenating our basis for $V$ with its complex conjugate to produce a real basis for $V$, the action of $\bs{z}$ on $V$ as real linear transformations is given by
\[
    \bs{z}\mapsto
    \diag(\bs{z}^{\bs{a}_1}, \ldots, \bs{z}^{\bs{a}_n},
    \bs{z}^{-\bs{a}_1}, \ldots, \bs{z}^{-\bs{a}_n}).
\]

Let $J \co V \to \mathfrak{g}^\ast$ denote the homogeneous quadratic moment map, let
$Z:= J^{-1}(0)$, and let $M_0 := Z/G$ denote the symplectic quotient; see
Section \ref{sec:Intro}.  As $G$ is abelian, the components of $J$ are elements of
$\R[V]^G$.  We may assume without loss of generality that $0$ is in the convex
hull of the columns of $A$ in $\R^\ell$ and the rank of $A$ is $\ell$;
see \cite[Section 2]{HerbigIyengarPflaum} or \cite[Section 3]{FarHerSea}.

Using Molien's formula, see \cite[Section 4.6.1]{DerskenKemperBook},
the Hilbert series of the invariant ring $\R[V]^G$ is given by
\[
    \Hilb_{\R[V]^G}(t)
    =
    \frac{1}{(2\pi i)^\ell} \int\limits_{\bs{z}\in\T^\ell}
    \frac{dz_1 dz_2 \cdots dz_\ell}
    {(\prod_{j=1}^n z_j ) \prod_{j=1}^n (1 - t \bs{z}^{\bs{a}_j}) (1 - t \bs{z}^{-\bs{a}_j})}.
\]
Then by \cite[Proposition 2.1]{HerbigSeaton}, the Hilbert series of the real regular functions
on the symplectic quotient $M_0$ is given by
\[
    \Hilb_{\R[M_0]}(t) =
    \frac{1}{(2\pi i)^\ell} \int\limits_{\bs{z}\in\T^\ell}
    \frac{(1 - t^2)^\ell dz_1 dz_2 \cdots dz_\ell}
    {(\prod_{j=1}^n z_j ) \prod_{j=1}^n (1 - t \bs{z}^{\bs{a}_j}) (1 - t \bs{z}^{-\bs{a}_j})}.
\]
Define the function
\[
    h(\bs{z},t)
    =
    \frac{(1 - t^2)^\ell}
    {(\prod_{j=1}^n z_j ) \prod_{j=1}^n (1 - t \bs{z}^{\bs{a}_j}) (1 - t \bs{z}^{-\bs{a}_j})},
\]
and then we have
\begin{align*}
    h(\bs{z},t^{-1})
    &=
    \frac{(1 - t^{-2})^\ell}
    {(\prod_{j=1}^n z_j ) \prod_{j=1}^n (1 - t^{-1} \bs{z}^{\bs{a}_j}) (1 - t^{-1} \bs{z}^{-\bs{a}_j})}
    \\&=
    \frac{t^{2(n-\ell)} (t^2 - 1)^\ell}
    {(\prod_{j=1}^n z_j ) \prod_{j=1}^n (1 - t \bs{z}^{\bs{a}_j}) (1 - t \bs{z}^{-\bs{a}_j})}
    \\&=
    (-1)^\ell t^{2(n-\ell)} h(\bs{z},t).
\end{align*}

Fix $t \in \C$ with $|t| < 1$, and then
\begin{align*}
    \Hilb_{\R[M_0]}(t^{-1})
    &=
    \frac{1}{(2\pi i)^\ell} \int\limits_{\bs{z}\in\T^\ell} h(\bs{z},t^{-1}) dz_1 dz_2 \cdots dz_\ell
    \\&=
    \frac{t^{2(n-\ell)}}{(2\pi i)^\ell} \int\limits_{\bs{z}\in\T^\ell} (-1)^\ell h(\bs{z},t) dz_1 dz_2 \cdots dz_\ell .
\end{align*}
Choose an $i$ and fix arbitrary values $z_k \in \Sp^1$ for $k\neq i$.  Dividing the numerator
and denominator by $z_i^{a_{i,j}}$ for each $a_{i,j} < 0$ to express $h(\bs{z},t)$ in terms of positive
powers of $z_i$, and using the fact that each row of $A$ contains at least one nonzero entry, it is easy to see that
\[
    \operatorname{Res}_{z_i=\infty} h(\bs{z},t)
    =
    -\operatorname{Res}_{z_i=0} \frac{1}{z_i^2} h(z_1, \ldots, 1/z_i, \ldots, z_n,t) = 0.
\]
A computation demonstrates that the transformation $t \mapsto t^{-1}$ induces a bijection between
the poles in $z_i$ inside the unit disk with those outside the unit disk.  Then
considering each $\Sp^1$-factor of $\T^\ell$ as a negatively-oriented curve about the
point at infinity, introducing a factor of $(-1)^\ell$, we have
\[
   \Hilb_{\R[M_0]}(t^{-1})
    =
    t^{2(n-\ell)} \Hilb_{\R[M_0]}(t).
\]
Then Theorem \ref{TheoremTorus} follows from Corollary \ref{CorRatSympOrderD}
and the fact that $\R[M_0]$ has dimension $d = 2(n-\ell)$.


\section{Applications to even Euler polynomials and Bernoulli Numbers}
\label{sec:Euler}

Our aim is to derive from the fact that the space of symplectic power series forms
a subalgebra (cf. Corollary \ref{cor:Product}) a certain combinatorial identity, Equation \eqref{eq:Eulerrelations}, that might be of independent interest. We recall that the  Euler polynomials $E_n(x)$, $n=0,1,2,\dots$, are defined by the expansion
\begin{equation}\label{Euler}
\frac{2e^{xt}}{e^t+1}=\sum_{n\ge 0} E_n(x)\frac{t^n}{n!}.
\end{equation}
We introduce numbers ${n \brack i}$ via
\[x(x^{2n}-E_{2n}(x))=:\sum_i {n \brack i} x^{2i},\]
observing that the even Euler polynomials, apart from their leading monomials, contain only monomials that are odd powers of $x$.  Note that ${n \brack i}$ are integers and ${n \brack i}=0$ for $i\le 0$ or $i>n$. The coefficients of the even indexed Euler polynomials are listed in the online encyclopedia \cite{OEIS} as entry A060083. The first six lines in the triangle of ${n \brack i}$ are:
\begin{eqnarray*}
\begin{array}{ccccccccccc}
&&&&&1&&&&&\\
&&&&-1&&2&&&&\\
&&&3&&-5&&3&&&\\
&&-17&&28&&-14&&4&&\\
&155&&-255&&126&&-30&&5&\\
-2073&&3410&&-1683&&396&&-55&&6\quad,
\end{array}
\end{eqnarray*}
where the line and diagonal indexing starts with $n=1$ (read from top to bottom) and $i=1$ (read from left to right). We warn the reader that the recursions for the ${n \brack i}$ have no resemblance to those for the binomial coefficients.
\begin{theorem} \label{thm:oddexpansion}
Let $\varphi(x)=\sum_{i\ge 0}\gamma_i\,x^i$ be a formal power series.
Then $\varphi(x)$ is symplectic if and only if for each $n\ge 0$,
\begin{equation}\label{oddexpansion}
\gamma_{2n+1}=\sum_i {n \brack i} \gamma_{2i}.
\end{equation}
In particular, for
each choice of   $\gamma_0,\gamma_2,\gamma_4,\gamma_6,\dots$ there is a uniquely defined symplectic power series $\varphi(x)$ determined by the above rule.
\end{theorem}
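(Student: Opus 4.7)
The plan is to establish the forward direction---that a symplectic series obeys (\ref{oddexpansion})---via a formal residue argument, and then deduce the converse from uniqueness. Both the symplectic conditions (\ref{Sm}) and the explicit formula (\ref{oddexpansion}) determine each $\gamma_{2n+1}$ as a linear function of the even coefficients $\gamma_{2i}$, so once the forward implication is established, any series satisfying (\ref{oddexpansion}) must agree with the unique symplectic series sharing its even coefficients, and is therefore itself symplectic.

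To set up the forward direction I would first unpack the definition of ${n \brack i}$. Writing $E_{2n}(x) = \sum_j e_j^{(n)} x^j$, the fact that $E_{2n}(x)$ has only $x^{2n}$ among its even-degree monomials (with leading coefficient $1$), together with the defining identity $\sum_i {n \brack i} x^{2i} = x^{2n+1} - x E_{2n}(x)$, yields ${n \brack i} = -e_{2i-1}^{(n)}$ for $i \ge 1$, so the relation $\gamma_{2n+1} = \sum_i {n \brack i}\gamma_{2i}$ is equivalent to
\[
    \sum_{j=0}^{2n} e_j^{(n)}\, \gamma_{j+1} = 0.
\]
This sum is precisely the formal residue
\[
    \sum_j e_j^{(n)}\, \gamma_{j+1} = [x^{-1}]\, \frac{E_{2n}(1/x)\,\varphi(x)}{x^2},
\]
where $E_{2n}(1/x)$ is read as a Laurent polynomial in $x$.

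Now suppose $\varphi$ is symplectic and set $T(x) := x/(x-1)$. By Theorem \ref{TheoremMoebius}, $\varphi(T(x)) = \varphi(x)$, and $T$ is an involutive formal local isomorphism at $0$ with $T'(0) = -1$. I would perform the change of variable $x = T(w)$ in the residue via the standard rule $[x^{-1}]f(x) = [w^{-1}]f(T(w))\,T'(w)$. A short calculation using $1/T(w) = 1 - 1/w$, $T(w)^2 = w^2/(w-1)^2$, and $T'(w) = -1/(w-1)^2$ shows that $T'(w)/T(w)^2 = -1/w^2$, whence
\[
    [x^{-1}] \frac{E_{2n}(1/x)\,\varphi(x)}{x^2}
    =
    -[w^{-1}] \frac{E_{2n}(1 - 1/w)\,\varphi(w)}{w^2}.
\]

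Finally, I would invoke the reflection symmetry $E_{2n}(1 - y) = E_{2n}(y)$, which follows from $E_n(1-y) = (-1)^n E_n(y)$---an immediate consequence of the generating function (\ref{Euler}). Substituting $y = 1/w$, the right-hand side above collapses to $-\sum_j e_j^{(n)}\gamma_{j+1}$, forcing this sum to vanish. The main technical obstacle is the formal residue change-of-variable computation, where signs must be tracked carefully; conceptually, however, the argument rests on the neat interplay between the Möbius involution $x \mapsto x/(x-1)$ and the reflection symmetry of the even Euler polynomials about $y = 1/2$, which together deliver the crucial sign flip that forces the residue to equal its own negative.
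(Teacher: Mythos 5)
Your proposal is correct, but it takes a genuinely different route from the paper. You prove the direction ``symplectic $\Rightarrow$ \eqref{oddexpansion}'' by combining three ingredients: the M\"obius invariance $\varphi(x)=\varphi(x/(x-1))$ from Theorem \ref{TheoremMoebius} (legitimate here, since that theorem is established in Section \ref{sec:MoebiusInvariance} without reference to Theorem \ref{thm:oddexpansion}), the formal-residue substitution rule for an order-one change of variable, and the reflection symmetry $E_{2n}(1-y)=E_{2n}(y)$; the sign flip coming from $T'(w)/T(w)^2=-1/w^2$ then forces $\sum_j e_j^{(n)}\gamma_{j+1}$ to equal its own negative, and your translation ${n\brack i}=-e_{2i-1}^{(n)}$ correctly identifies this vanishing with \eqref{oddexpansion}. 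The converse via uniqueness is sound, resting on the existence and uniqueness of a symplectic series with prescribed even coefficients (the remark following Definition \ref{def:symplectic}). The paper argues in the opposite direction: it proves ``\eqref{oddexpansion} $\Rightarrow$ symplectic'' directly, using Gessel's binomial-transform identity \eqref{eq:Gesselmagic} applied to $c_i=E_i(x)$, $d_n=2x^n-E_n(x)$, and then an umbral functional $\Gamma$ sending odd powers $x^n$ to $\gamma_{n+1}$, with the other implication again following from uniqueness. What each approach buys: yours is more conceptual, exhibiting \eqref{oddexpansion} as the shadow of the M\"obius involution paired with the symmetry of the even Euler polynomials about $y=1/2$, and it avoids citing Gessel's Theorem 7.4; the paper's proof is independent of the M\"obius/composition machinery and stays entirely within elementary binomial and umbral identities. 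For a complete writeup you should state (or cite) the formal residue change-of-variables lemma for Laurent series with finitely many negative terms under a substitution of order one, since your integrand $E_{2n}(1/x)\varphi(x)/x^{2}$ is genuinely formal rather than convergent, but this is standard and your use of it is correct.
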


\begin{corollary}
\label{Eulerrelations}
For all integers  $n,k,\ell$ we have
\begin{equation}\label{eq:Eulerrelations}
{n-k \brack \ell}+{n-\ell\brack k}={n \brack k+\ell} +\sum_{i}\sum_{r}{n \brack i}{r-1 \brack k}{i-r \brack \ell}.
\end{equation}
\end{corollary}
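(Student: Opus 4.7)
The plan is to interpret the identity as the compatibility condition between two different descriptions of the odd coefficients of a product of two symplectic power series: a direct Cauchy-product computation on the one hand, and, on the other hand, the odd-from-even recipe of Theorem \ref{thm:oddexpansion} applied to the product. The subalgebra property of Corollary \ref{cor:Product} is exactly what licences the second description.

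More concretely, by Theorem \ref{thm:oddexpansion}, for each integer $k\ge 0$ there is a unique symplectic power series $\psi_k(x)\in\C[\![x]\!]$ whose even coefficients are $\delta_{i,k}$, namely
\[
\psi_k(x)\;=\;x^{2k}+\sum_{n\ge 0}{n \brack k}\,x^{2n+1}.
\]
I would then expand $\psi_k(x)\psi_\ell(x)$ by brute force. The coefficient of $x^{2n+1}$ in the product picks up contributions only from the cross terms $x^{2k}\cdot x^{2(n-k)+1}$ and $x^{2(n-\ell)+1}\cdot x^{2\ell}$, giving
\[
{n-k \brack \ell}+{n-\ell \brack k}.
\]
The coefficient of $x^{2i}$ receives contributions from $x^{2k}\cdot x^{2\ell}$ and from the double sum of two odd parts; after the substitution $r=n+1$ in the convolution index, it equals
\[
\delta_{i,k+\ell}+\sum_{r}{r-1 \brack k}{i-r \brack \ell}.
\]

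By Corollary \ref{cor:Product}, $\psi_k(x)\psi_\ell(x)$ is symplectic, so Theorem \ref{thm:oddexpansion} applies again and expresses the coefficient of $x^{2n+1}$ as $\sum_i{n \brack i}$ times its coefficient of $x^{2i}$. Equating the two resulting expressions and distributing the outer sum over the two summands in the even coefficient produces precisely
\[
{n-k \brack \ell}+{n-\ell\brack k}\;=\;{n \brack k+\ell}+\sum_{i}\sum_{r}{n \brack i}{r-1 \brack k}{i-r \brack \ell},
\]
which is the identity for $n,k,\ell\ge 0$. Extending to arbitrary integers $n,k,\ell$ is automatic because of the vanishing convention ${n \brack i}=0$ for $i\le 0$ or $i>n$, which forces both sides to vanish in the remaining cases. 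I do not expect a serious obstacle here: the only delicate step is the reindexing of the convolution sum, and the substantive content — that the subalgebra property of symplectic series, combined with the explicit odd-from-even formula, imposes a bilinear constraint on the array ${n \brack i}$ — is built into the setup of Theorem \ref{thm:oddexpansion} and Corollary \ref{cor:Product}.
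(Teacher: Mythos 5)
Your proof is correct and is essentially the paper's own: the paper multiplies two arbitrary symplectic series, expresses the odd Cauchy coefficient $\vartheta_{2n+1}$ both directly and via \eqref{oddexpansion} applied to the product (symplectic by Corollary \ref{cor:Product}), and then compares the coefficients of $\gamma_{2k}\delta_{2\ell}$ using that the even coefficients may be chosen freely — your indicator series $\psi_k(x)$, $\psi_\ell(x)$ are exactly that free choice made concrete. The only inaccuracy is your closing remark: for negative $k$ or $\ell$ the two sides need not both vanish (for $(n,k,\ell)=(1,-1,1)$ the left side is ${2 \brack 1}=-1$ while the right side is $0$), so, just as in the paper's own argument, the identity is really established for $n,k,\ell\ge 0$, with the remaining cases covered only when the vanishing convention actually applies.
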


We would like to mention that the Euler coefficients are related to the Genocchi numbers $G_n$, respectively the Bernoulli numbers $B_n$, using the formula
\[{n\brack i}=-\frac{G_{2(n-i+1)}}{2(n-i+1)}{2n\choose 2i-1}=\frac{4^{n-i+1}-1}{n-i+1}B_{2(n-i+1)}{2n\choose 2i-1}.\]
This means that Equation \eqref{eq:Eulerrelations} can be interpreted as a cubic relation for the Bernoulli numbers.

\begin{proof}[Proof of Corollary \ref{Eulerrelations}] Let $\sum_{i\ge 0}\gamma_i\,x^i$ and $\sum_{j\ge 0}\delta_j\,x^j$ be symplectic power series. From Corollary \ref{cor:Product} we know that their Cauchy product
$\sum_{m\ge 0}\vartheta_m\,x^m$ is symplectic with, for each $m\in \N$, $\vartheta_m=\sum_{i+j=m}\gamma_i\:\delta_j$.
The left hand side of Equation \eqref{eq:Eulerrelations} arises from expressing
\[\vartheta_{2n+1}=\sum_{r+s=2n+1}\gamma_r\:\delta_s\]
in terms of even $\gamma$'s and $\delta$'s using Equation \eqref{oddexpansion}.
Similarly, the right hand side of Equation \eqref{eq:Eulerrelations} arises from expressing
\[\vartheta_{2n+1}=\sum_{i=1}^n{n\brack i}\vartheta_{2i}=\sum_{i=1}^n\sum_{r+s=2i}{n\brack i}\gamma_r\:\delta_s\]
in terms of even $\gamma$'s and $\delta$'s. In the argument, we also use the fact that
the even $\gamma$'s and $\delta$'s can be chosen freely.
\end{proof}

\begin{proof}[Proof of Theorem \ref{thm:oddexpansion}.] The argument is inspired by \cite[Section 7]{GesselUmbral}. There the situation is studied when two sequences  $(c_n)_{n\in\N}$ and $(d_n)_{n\in\N}$ are related by
\begin{eqnarray}
\label{eq:BasicBinomRel}
d_n=\sum_{i=0}^n{n\choose i}c_i
\end{eqnarray}
for all $n$.  Then \cite[Theorem 7.4]{GesselUmbral} states that for all nonnegative integers $m$ and $n$,
\begin{eqnarray}
\label{eq:Gesselmagic}
\sum_{i=0}^m{m\choose i}c_{n+i}=\sum_{j=0}^n(-1)^{n+j}{n\choose j}d_{m+j}.
\end{eqnarray}
By inspection of the generating function \eqref{Euler}, we derive the recursion
\[E_n(x)+\sum_{i=0}^n{n\choose i}E_i(x)=2x^n.\]
The idea is to put $c_i:=E_i(x)$ and $d_n:=2x^n-E_n(x)$ and observe that condition \eqref{eq:BasicBinomRel} holds. As the special case $n=m\ge 0$ of \eqref{eq:Gesselmagic}, we find
\begin{eqnarray}
\label{eq:EulSum}
\sum_{i=0}^n {n\choose i} E_{n+i}(x)=\sum_{i=0}^n (-1)^{n+i} {n\choose i}\left(2x^{n+i}-E_{n+i}(x)\right).
\end{eqnarray}
This can be rewritten as
\[\sum_{\substack{i=0\\n+i\text{ even}}}^n{n\choose i}\left(x^{n+i}-E_{n+i}(x)\right)=\sum_{\substack{i=0\\n+i\text{ odd}}}^n{n\choose i}x^{n+i},\]
which is equivalent to
\begin{eqnarray}\label{eq:doublestar}
\sum_{\substack{i=0\\n+i\text{ even}}}^n
(-1)^i{n\choose i}\sum_j {{n+i\over 2}\brack j}x^{2j-1}+
\sum_{\substack{i=0\\n+i\text{ odd}}}^n(-1)^i{n\choose i}x^{n+i}=0.
\end{eqnarray}

Let now $(\gamma_n)_{n\in \N}$ be a number sequence such that \eqref{oddexpansion} holds.
It will be enough to show that $\sum_{i \ge 0} \gamma_i\,x^i$ is symplectic. We interpret
$x$ as an umbral variable (cf. for example \cite{GesselUmbral}) and define the functional $\Gamma:\C[x]\to \C$ by
$\Gamma(x^n)=\gamma_{n+1}$ for $n$ odd. For even $n$, we put $\Gamma(x^n)=0$ (this choice
will not affect the considerations to follow). Applying $\Gamma$ to Equation \eqref{eq:doublestar},
we end up with
\begin{eqnarray*}
\sum_{\substack{i=0\\n+i\text{ even}}}^n
(-1)^i{n\choose i}\underbrace{\sum_j {{n+i\over 2}\brack j}\gamma_{2j}}_{=\gamma_{n+i+1}}+
\sum_{\substack{i=0\\n+i\text{ odd}}}^n (-1)^i{n\choose i}\gamma_{n+i+1}=0,
\end{eqnarray*}
showing that $\sum_{i=0}^n
(-1)^i{n\choose i}\gamma_{n+i+1}=0$, i.e., condition $(\operatorname S_{n+1})$ of Definition \ref{def:symplectic}.
\end{proof}

To complete this section, we use the above observations to indicate an alternate symplectic basis than those considered in Section \ref{sec:FormalComposite}.

\begin{lemma}
\label{lem:AltSympBasis}
For all integers  $k \geq 0$, the formal power series
\begin{eqnarray}\label{eq:AltSympBasis}
    \qquad\psi_k(x)
    :=
    \frac{1}{(2k-1)!}\sum\limits_{i=0}^\infty (-1)^{i-1} E_{i-1}^{(2k-1)}(0) \,x^i
    =
    - x^{2k} - \sum\limits_{i=k}^\infty {i \brack k}x^{2i+1}
\end{eqnarray}
is symplectic.
\end{lemma}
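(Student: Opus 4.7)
The plan is to reduce the lemma to identifying the two expressions given for $\psi_k(x)$ and then invoking Theorem \ref{thm:oddexpansion}. For $k=0$, both expressions collapse to the constant $-1$ (using that ${n\brack 0}=0$), and a constant is trivially symplectic, so I will focus on $k\ge 1$. The right-hand expression makes the even coefficients visible as $\gamma_{2j}=-\delta_{j,k}$, and Theorem \ref{thm:oddexpansion} dictates the unique symplectic extension to be $\gamma_{2n+1}=\sum_i{n\brack i}\gamma_{2i}=-{n\brack k}$, which is precisely the odd coefficient read off from the right. Hence once the two displayed expressions for $\psi_k(x)$ are shown to coincide, the series is by construction the symplectic extension of its own even part.

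To verify that equality, I would exploit the Appell identity $E_n'(x)=nE_{n-1}(x)$ (immediate from differentiating the generating function \eqref{Euler} in $x$), which iterates to $E_n^{(r)}(x)=\frac{n!}{(n-r)!}E_{n-r}(x)$ for $r\le n$. Evaluating at $x=0$ collapses the coefficient of $x^i$ in the left-hand expression to $(-1)^{i-1}\binom{i-1}{2k-1}E_{i-2k}(0)$, with the convention that the binomial vanishes for $i<2k$. The term $i=2k$ produces $-E_0(0)=-1$, matching the leading $-x^{2k}$ on the right.

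The remaining coefficients split by the parity of $i$. Setting $x=0$ in the generating function \eqref{Euler} gives $\sum_{n\ge 0}E_n(0)\,t^n/n!=2/(e^t+1)=1-\tanh(t/2)$; since $\tanh$ is odd, $E_{2m}(0)=0$ for every $m\ge 1$, so all coefficients at even $i>2k$ vanish as required. For odd $i=2j+1$ with $j\ge k$, the same specialization combined with the Genocchi generating function from Remark \ref{rmk:Genocchi} yields the bridging identity $E_n(0)=G_{n+1}/(n+1)$, so the coefficient of $x^{2j+1}$ becomes $\binom{2j}{2k-1}\cdot G_{2(j-k+1)}/(2(j-k+1))$. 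Comparing against the formula ${n\brack i}=-\frac{G_{2(n-i+1)}}{2(n-i+1)}\binom{2n}{2i-1}$ recalled just before the lemma identifies this quantity as $-{j\brack k}$, exactly matching the right-hand expression. The main obstacle is not conceptual but notational: keeping the index shifts between the derivative description on the left, the values $E_n(0)$, and the triangle ${n\brack i}$ straight. Once the identity $E_n(0)=G_{n+1}/(n+1)$ is in hand, every remaining step is a direct substitution.
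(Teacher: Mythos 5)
Your proof is correct, and it takes a genuinely different route from the paper's. The paper never identifies the coefficients of $\psi_k$ at all: it deduces from the identity \eqref{eq:EulSum} that every odd-degree coefficient of $\sum_{i=0}^{n}\binom{n}{i}E_{n+i}(x)$ vanishes, hence $\sum_{i=0}^{n}\binom{n}{i}E_{n+i}^{(2k-1)}(0)=0$ for $k\ge 1$, which is precisely condition (\ref{Sm}) with $m=n+1$ for the left-hand series in \eqref{eq:AltSympBasis}; the second equality in \eqref{eq:AltSympBasis} is not used (nor verified) there. You instead prove that equality coefficientwise, via the Appell property $E_n'=nE_{n-1}$, the vanishing $E_{2m}(0)=0$ for $m\ge 1$, and $E_n(0)=G_{n+1}/(n+1)$ combined with the Genocchi formula for ${n\brack i}$ quoted before the lemma, and then conclude from Theorem \ref{thm:oddexpansion}, since the right-hand series has even coefficients $\gamma_{2j}=-\delta_{j,k}$ and odd coefficients $\gamma_{2n+1}=-{n\brack k}=\sum_i{n\brack i}\gamma_{2i}$. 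The index bookkeeping checks out (the coefficient of $x^{2j+1}$ is $\binom{2j}{2k-1}E_{2(j-k)+1}(0)=-{j\brack k}$), and there is no circularity, as Theorem \ref{thm:oddexpansion} is proved before and independently of the lemma. What your route buys is an explicit verification of the second equality in \eqref{eq:AltSympBasis}, which the paper leaves implicit; what the paper's route buys is economy, needing only \eqref{eq:EulSum} and no special values $E_n(0)$ nor the Genocchi formula for ${n\brack i}$ (stated in the paper without proof) --- though you could bypass the latter by expanding $E_{2j}(x)=\sum_m\binom{2j}{m}E_m(0)\,x^{2j-m}$, which yields $-{j\brack k}=\binom{2j}{2k-1}E_{2(j-k)+1}(0)$ directly. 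Finally, the case $k=0$ is formally awkward on the left of \eqref{eq:AltSympBasis} for you and for the paper alike (a derivative of order $-1$); your reading of $\psi_0=-1$ via the right-hand side is the sensible interpretation.
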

\begin{proof}
Using Equation \eqref{eq:EulSum}, we write
\begin{align*}
    \sum\limits_{i=0}^n {n \choose i} E_{n+i}(x)
    &={1\over 2}\sum_{i=0}^n {n \choose i} E_{n+i}(x)+{1\over 2}\sum_{i=0}^n(-1)^{n+i}{n \choose i}\left(2x^{n+i}-E_{n+i}(x)\right)\\
    &={1\over 2}\sum_{i=0}^n\left(1-(-1)^{n+i}\right){n \choose i} E_{n+i}(x)+\sum_{i=0}^n {n \choose i}(-x)^{n+i}\\
    &=\sum_{\substack{i=0\\n+i\text{ odd}}}^n{n \choose i}\underbrace{\left( E_{n+i}(x)-x^{n+i}\right)}_{=:\, (*)}+\sum_{\substack{i=0\\n+i\text{ even}}}^n{n \choose i}x^{n+i},
\end{align*}
where $(*)$ contains only even powers of $x$. Thus in $\sum_{i} {n \choose i} E_{n+i}(x)$, all coefficients of odd degree vanish, meaning that for all $k\ge 1$,
\[{1\over (2k-1)!}\sum_{i=0}^n{n\choose i} E_{n+i}^{(2k-1)}(0)=0.\]
It follows that $\sum\limits_{i=0}^\infty (-1)^{i-1} E_{n-1}^{(2k-1)}(0) \,x^i$ fulfills $(\mathrm S_{n+1})$.
\end{proof}

As a consequence of Lemma \ref{lem:AltSympBasis}, we have that $(\psi_n(x))_{n\in\N}$
forms a symplectic basis in the sense of Definition \ref{def:SympBasis}. Note that
$\psi_1$ is essentially the generating function of the Genocchi sequence \eqref{eq:phi1}, namely we have $\psi_1(x)=-\varphi_1(x)$. The idea of the above proof can be used to argue
that for each $\lambda\in\C$ the power series
\[\sum_{i\ge 0}(-1)^{i-1}\left(E_{i-1}(\lambda)-E_{i-1}(-\lambda)\right)\, x^i\in\C[\![x]\!]\]
is symplectic.


\section{Sample Calculations}
\label{sec:Calculations}

In this section, we survey a few special cases of Conjecture \ref{HigherRelations}
and Theorem \ref{TheoremTorus} that can be verified by direct computations.
We first consider the case of a unitary representation of a
finite group.  In this case, as a consequence of Corollary \ref{CorRatSympOrderD},
Conjecture \ref{HigherRelations} is a special case of Watanabe's Theorem
\cite{WatanabeGor1,WatanabeGor2}, see in particular
\cite[Theorem 7.1]{StanleyInvFinGp}.


\subsection{Quotients by finite unitary group representations}
\label{subsec:FiniteGroups}

Let $G$ be a finite group and $G\to \U(V)$ a unitary representation.
For $g \in G$, we let $g_V\co V\to V$ denote the corresponding
linear transformation.  Let $W:= V \times \cc{V}$, and then $G$ acts on $W$ via
$g_W\co (u, \overline{v}) \mapsto (g_V u, (g_V^{-1})^t \cc{v})$.
We identify $\R[V]$ with the subring $\C[W]^{-}$ of $\C[W]$
given by those elements fixed by complex conjugation, and then by Molien's formula
\cite{Molien}, see also \cite{DerskenKemperBook,SturmfelsBook}, the Hilbert series
of real regular invariants is given by
\begin{equation}
\label{eq:Molien}
    \Hilb_{\R[V]^G|\R}(t)
    =
    \frac{1}{|G|}\sum\limits_{g\in G} \frac{1}{\det(\id_W - g_W^{-1} t)}.
\end{equation}
Fix $g \in G$ and choose a basis for $V$ with respect to which $g_V$ is diagonal,
say $g_V = \diag(\lambda_1, \ldots, \lambda_n)$ where $|\lambda_i| = 1$ for each $i$.
Choosing the conjugate basis for $\cc{V}$ and concatenating to form a basis for $W$, we have
$g_W = \diag(\lambda_1,\ldots,\lambda_n,\cc{\lambda_1},\ldots,\cc{\lambda_n})$.
It then follows that
\begin{equation}
\label{eq:MolienTerm}
    \frac{1}{\det(\id_W - g_W^{-1} t)}
    =
    \prod\limits_{i=1}^n \frac{1}{(1 - \lambda_i t)(1 - \lambda_i^{-1} t)}.
\end{equation}

In this case, each term of the sum in
Equation \eqref{eq:Molien} is symplectic of order $2n$ at $t = 1$.
Specifically, for $\lambda \in \C$, define
\[
    f_\lambda(t) := \frac{1}{(1-\lambda t)(1 - \lambda^{-1} t)}.
\]
Then by the above observations, we have that each term in Molien's formula
is given by a product of $f_{\lambda_i}(t)$.  We claim the following.

\begin{lemma}
\label{lem:FiniteBaseCase}
For nonzero $\lambda\in\C$, the function $f_\lambda(t)$ is symplectic at
$t=1$ of order $2$.
\end{lemma}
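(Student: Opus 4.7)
The most efficient route is through Corollary \ref{CorRatSympOrderD}, which turns the claim into a single functional equation. Specializing \eqref{FuncEq t=a} to $a = 1$ and $d = 2$, the Möbius piece $\tfrac{a^2-2a+(1-a)t}{a-1-t}$ collapses to $1/t$ and the prefactor $(a-1-t)^d$ collapses to $t^2$, so proving the lemma reduces to verifying the single identity $f_\lambda(1/t) = t^2 f_\lambda(t)$.

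I would then make this one-line check. The denominator of $f_\lambda$ is
\[
(1 - \lambda t)(1 - \lambda^{-1}t) = 1 - (\lambda + \lambda^{-1})t + t^2,
\]
which is palindromic of degree two, so the substitution $t \mapsto 1/t$ followed by clearing by $t^2$ returns the same polynomial. The equality $t^2 f_\lambda(t) = f_\lambda(1/t)$ is immediate, and Corollary \ref{CorRatSympOrderD} concludes.

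There is no genuine obstacle; the only structural input is that $f_\lambda$ is built symmetrically from the pair of reciprocal roots $\lambda$ and $\lambda^{-1}$, which is exactly what the $a=1$ instance of the functional equation tests. Should one prefer a more hands-on argument that avoids the corollary, I would instead invoke Proposition \ref{FormalComposite} directly: expanding the denominator of $f_\lambda(1-x)$ yields $x^2 + c(1-x)$ with $c := 2 - \lambda - \lambda^{-1}$, whence
\[
x^2 f_\lambda(1-x) = \frac{x^2}{x^2 + c(1-x)} = \frac{y}{y+c}, \qquad y := \frac{x^2}{1-x}.
\]
This exhibits the required formal composite with $\rho(y) = y/(y+c)$. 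The degenerate case $\lambda = 1$, in which $c = 0$, collapses to $x^2 f_1(1-x) = 1$, trivially symplectic via the constant $\rho(y) = 1$.
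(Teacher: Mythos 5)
Your proposal is correct, and your primary argument takes a genuinely different route from the paper's written proof. The paper proves Lemma \ref{lem:FiniteBaseCase} by exhibiting an explicit rational $\rho_\lambda(y)$ with $\frac{1}{(1-t)^2}\,\rho_\lambda\bigl(\tfrac{(1-t)^2}{t}\bigr)=f_\lambda(t)$ and invoking Corollary \ref{cor:Product} (treating $\lambda=1$ separately), whereas you specialize the functional equation of Corollary \ref{CorRatSympOrderD} to $a=1$, $d=2$, reducing the claim to $f_\lambda(1/t)=t^2 f_\lambda(t)$, which is immediate from the palindromic denominator $1-(\lambda+\lambda^{-1})t+t^2$; the only hypothesis worth stating explicitly (and trivially true here) is that the pole order of $f_\lambda$ at $t=1$ is at most $2$, as required in Definition \ref{def:symplectic}. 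This one-line check is precisely the alternative the paper itself mentions in the remark following the lemma; it buys brevity but leans on the rational-function statement Theorem \ref{TheoremMoebius} via Corollary \ref{CorRatSympOrderD}, while the paper's composite argument needs only Proposition \ref{FormalComposite} and Corollary \ref{cor:Product}. Your fallback computation is essentially the paper's proof, and your explicit composite is right: $x^2 f_\lambda(1-x)=x^2/\bigl(x^2+c(1-x)\bigr)=y/(y+c)$ with $y=x^2/(1-x)$ and $c=2-\lambda-\lambda^{-1}$, equivalently $\rho(y)=\lambda y/\bigl(\lambda y-(1-\lambda)^2\bigr)$; in fact this corrects what appears to be a sign slip in the paper's displayed $\rho_\lambda(y)=\lambda y/(1+2\lambda+\lambda^2+\lambda y)$, which reproduces $f_{-\lambda}$ rather than $f_\lambda$ (test $\lambda=-1$). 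Your handling of the degenerate case $\lambda=1$, where $c=0$ and $x^2f_1(1-x)=1$, matches the paper's.
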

\begin{proof}
If $\lambda = 1$, then $f_1(t) = (1 - t)^{-2}$ and the result is trivial,
so assume not.  Define
\[
    \rho_\lambda(y) = \frac{\lambda y}{1 + 2\lambda + \lambda^2 + \lambda y},
\]
and then by a simple computation,
\[
    \frac{1}{(1 - t)^2} \:\rho_\lambda\!\left(\frac{(1 - t)^2}{t}\right)
    =
    f_\lambda(t).
\]
The result then follows from Corollary \ref{cor:Product}.
\end{proof}

We remark that Lemma \ref{lem:FiniteBaseCase} can also be seen using the expansion
\[
    f_\lambda(t)
    =
    \sum\limits_{k=-2}^\infty
        \frac{-\lambda (\lambda^{k+1} - (-1)^{k+1})}{(\lambda^2-1)(\lambda-1)^{k+1}}
        (1 - t)^k
\]
and verifying \eqref{Sm} directly, or by checking that $f_\lambda(t)$ satisfies
\eqref{FuncEq t=a} for $a = 1$ and $d = 2$.

By Lemma \ref{lem:FiniteBaseCase} and Corollary \ref{cor:Product}, it follows that the expression
in Equation \eqref{eq:MolienTerm} is symplectic of order $2n$ at $t = 1$.


\subsection{Symplectic quotients by $\Sp^1$}
\label{subsec:CircleActions}

We observe that Corollary \ref{cor:Product} (in the case $a=1$) is a sufficient tool to verify Conjecture
\ref{HigherRelations} for symplectic $\Sp^1$-quotients case-by-case.  When the action is
\emph{generic}, i.e. no two weights have the same absolute value, an algorithm
for computing the Hilbert series is described in \cite[Section 4]{HerbigSeaton}
and has been implemented on \emph{Mathematica} \cite{Mathematica}.
To check that a concrete Hilbert series is symplectic of order $d=\dim(M_0)$, we use the
substitution
\[t\mapsto {1\over 2}\left(y+2\pm\sqrt{y(y+4)}\right)\]
as a heuristic to find a rational function $\rho(y)$
as in Corollary \ref{cor:Product}.

As an example, if $M_0$ is a reduced space associated to the weight vector
$(\pm 1, \pm 2, \pm 3)$ where not all weights have the same sign, then
\[
    \Hilb_{\R[M_0]}(t)
    =
    \frac{t^{10} + t^8 + 3t^7 + 4t^6 + 4t^5 + 4t^4 + 3t^3 + t^2 + 1}
    {(1 - t^2)(1 - t^3)(1 - t^4)(1 - t^5)}.
\]
One easily checks that
\[
    \rho(y)
    =   \frac{y^5 + 10y^4 + 36y^3 + 59 y^2 + 50 y + 22}
        {(y + 2)(y + 3)(y + 4)(y^2 + 5y + 5)}
\]
satisfies the condition of Corollary \ref{cor:Product}.

Similarly, let $\Sp^1$ act on $\C^n$ with weight vector $(\pm 1, \ldots, \pm 1)$ and let $M_0$
denote the symplectic quotient, which we note has dimension $2n-2$.
We assume that $n \geq 2$ and not all weights have  the same sign, for otherwise
$M_0$ is a point and the result is trivial.  By \cite[Section 5.3]{HerbigSeaton},
the Hilbert series $\R[M_0]$ is given by
\[
    \Hilb_{\R[M_0]}(t)
    =  (1 - t^2) \;{}_2 F_1(n,n,1,t^2)
    =   \frac{1}{(1 - t^2)^{2n-2}} \sum\limits_{k=0}^{n-1} {n-1 \choose k}^2 t^{2k}
\]
where ${}_2 F_1$ denotes the hypergeometric function; see \cite{BatemanTransFunBook}.
Theorem \ref{TheoremTorus} can be verified directly using Corollary \ref{cor:Product}
for this case as follows.

Define
\[
    \rho(y)
    =
    \frac{1}{(y+4)^{n-1}}\sum\limits_{k=0}^{n-1}
        {n-1\choose k}{2k\choose k} y^{n-k-1}.
\]
Let $(a)_b$ denote the Pochhammer symbol, i.e.
$(a)_b := a(a+1)\cdots (a+b-1)$ for $b>0$ and $(a)_0 = 1$,
and note that $(1/2)_k = (2k)!/(4^k k!)$.  Applying the definition of
${}_2 F_1$, we compute
\begin{align*}
    \frac{1}{(1 - t)^{2n-2}} \:\rho\!\left(\frac{(1 - t)^2}{t}\right)
    &=
    \frac{1}{(1 + t)^{2n-2}}\sum\limits_{k=0}^{n-1}
                {n-1\choose k}{2k \choose k} \left(\frac{t}{(1 - t)^2}\right)^k
    \\&=
    \frac{1}{(1 + t)^{2n-2}}\sum\limits_{k=0}^{n-1}
                \frac{(1-n)_k (1/2)_k}{ (1)_k  k!}
                \left( \frac{-4t}{(1 - t)^2} \right)^k
    \\&=        \frac{1}{(1 + t)^{2n-2}}\;
                {}_2 F_1(1-n,1/2,1,-4t/(1-t)^2).
\end{align*}
We then apply \cite[page 113, Equation (36)]{BatemanTransFunBook} and continue
\begin{align*}
    =           \frac{1}{(1 - t^2)^{2n-2}}\;
                {}_2 F_1(1-n,1-n,1,t^2)
    &=          \frac{1}{(1 - t^2)^{2n-2}}
                \sum\limits_{k=0}^{n-1} \frac{(1-n)_k (1-n)_k}{(1)_k k!}t^{2k}
    \\&=        \frac{1}{(1 - t^2)^{2n-2}}
                \sum\limits_{k=0}^{n-1} {n-1\choose k}^2 t^{2k}
    \\&=        \Hilb_{\R[M_0]}(t).
\end{align*}


\bibliographystyle{amsplain}
\bibliography{HerbigHerdenSeaton}

\end{document}